\newtheorem{theorem}{Theorem}
\newtheorem{corollary}[theorem]{Corollary}
\newtheorem{lemma}[theorem]{Lemma}
\newtheorem{example}[theorem]{\it Example}
\newtheorem{proposition}[theorem]{Proposition}
\newtheorem{remark}[theorem]{\it Remark}
\font\tenBb=msbm10 \font\sevenBb=msbm7 \font\fiveBb=msbm5
\def\Bb{\fam\Bbfam\tenBb}
\def\R{{\Bb R}}
\def\C{{\Bb C}}
\def\N{{\Bb N}}
\def\D{{\mathcal D}_+}
\def\D{{\mathbb D}}
\renewcommand\Re{{\rm Re}}
\newcommand\Lip{{\rm Lip}}
\newcommand\lip{{\rm lip}}
\NewDocumentCommand\UpArrow{O{2.0ex} O{black}}{%
    \mathrel{\tikz[baseline] \draw [->, line width=0.5pt, #2] (0,0) -- ++(0,#1);}
}
\NewDocumentCommand\DownArrow{O{2.0ex} O{black}}{%
    \mathrel{\tikz[baseline] \draw [<-, line width=0.5pt, #2] (0,0) -- ++(0,#1);}
}
\begin{document}
\title{Boundary values of holomorphic semigroups and fractional integration}

 \author{Omar El-Mennaoui}
  \address{Universit\'e Ibn Zohr, Facult\'e des Sciences, D\'epartement de Math\'ematiques, Agadir, Maroc}
  \email{elmennaouiomar@yahoo.fr}

\author{Valentin  Keyantuo}
\address{University of Puerto Rico,  R\'io Piedras Campus, Department of Mathematics,
Faculty of Natural Sciences,  17 AVE Universidad STE 1701,  San
Juan PR 00925-2537 U.S.A.}
\email{valentin.keyantuo1@upr.edu}

\author{Ahmed Sani}
  \address{Universit\'e Ibn Zohr, Facult\'e des Sciences, D\'epartement de Math\'ematiques, Agadir, Maroc}
 \email{ahmedsani82@gmail.com }

\thanks{The work of the second   author  is partially supported by  Air Force Office of Scientific Research under the Award No:  FA9550-18-1-0242.}


\subjclass[2010]{Primary   47D06; Secondary 47D60  }

\keywords{Holomorphic semigroup, fractional powers, fractional integration, Hadamard type fractional integrals, little
H\"older spaces, Riemann-Liouville semigroup.}

\begin{abstract}
The concept of  boundary values of holomorphic semigroups in a general Banach space is studied.  As an application, we consider the  Riemann-Liouville  semigroup
 of  integration operator  in the little  H\"older spaces
$\rm{lip}_0^\alpha[0,\, 1] , \,  0<\alpha<1$ and prove that it
admits a strongly  continuous boundary group, which is the group of
fractional integration of purely imaginary order. The corresponding
result for the $L^p$-spaces ($1<p<\infty$) has been known for some
time, the case $p=2$ dating back to the monograph
 by Hille and Phillips. In the context of $L^p$ spaces, we establish the existence of the boundary group of the Hadamard fractional integration
  operators using  semigroup methods. In the general framework, using a suitable spectral decomposition,we give a partial treatment of the inverse problem, namely: Which $C_0$-groups are  boundary values of some holomorphic semigroup of angle $\pi/2$?

\end{abstract}

\maketitle

\section{Introduction}
\setcounter{theorem}{0}
 \setcounter{equation}{0}

The concept of boundary values of holomorphic semigroups that we
use in the present work originates in the treatise \cite{HP1957}. Specifically, in \cite[Chapter 17]{HP1957}, the authors give a necessary and sufficient condition
for a holomorphic semigroup of angle $\pi/2$ to admit a boundary value group. The converse question of which groups are boundary values of holomorphic semigroups  is answered in \cite[Theorem 17.10.1]{HP1957}.
Using this concept, Arendt, El-Mennaoui and Hieber \cite{AEH1997} gave an
elementary proof to the classical result of L. H\"ormander \cite[Theorem 3.9.4 and Chapter 8]{AEH1997} to the effect
that the Schr\"odinger operators $\pm i\Delta$ generate  $C_0-$semigroups on
$L^p(\mathbb{R}^n)$ if and only if $p=2.$ Later the same approach
was  also used in \cite{EK1996} to prove a similar result in $L^p(\Pi ^n)$
with Dirichlet, Neumann or periodic boundary conditions (where
$\Pi ^n=\mathbb{R}^n/\mathbb{Z}^n$ denotes the $n-$dimensional
torus).

The following proposition \cite[Theorem 17.9.1 and Theorem 17.9.2]{HP1957} answers the question of which holomorphic semigroups  in the right half-plane admit boundary value groups.

\begin{proposition} Let $A$ be the generator of a holomorphic $C_0-$semigroup $T$ of   angle $\pi/2$
 on a Banach space $E $.
Then $iA$ (resp. $-iA$) generates a $C_0-$semigroup (which is the
boundary value of $T$) if and only if $T$ is bounded on $D_+:=\{
z\in \mathbb{C};{\rm Re}(z)>0,\ {\rm Im}(z)>0,\ \vert z\vert \leq
1\}$ (resp. on $D_-:=\{ z\in \mathbb{C};{\rm Re}(z)>0,\ {\rm
Im}(z)<0,\ \vert z\vert \leq 1\}$).
\end{proposition}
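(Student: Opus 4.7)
The plan is to prove the two implications separately; the forward direction is short and the converse carries the bulk of the work.

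\emph{Forward direction.} Assuming $iA$ generates a $C_0$-semigroup $S$, I would establish the factorization $T(x+iy) = T(x)S(y)$ for $x > 0$ and $y \ge 0$. Fix $f \in D(A)$ and $x, y > 0$, and consider $\varphi(s) = T(x+is)S(y-s)f$ on $[0,y]$. The holomorphy of $T$ gives $\frac{d}{ds}T(x+is)f = iT(x+is)Af$, while $iA$ generating $S$ gives $\frac{d}{ds}S(y-s)f = -iS(y-s)Af$; combined with the commutation of $T(x+is)$ with $A$ on $D(A)$, this shows $\varphi'(s) \equiv 0$. Equating $\varphi(0)$ with $\varphi(y)$ and extending by density gives the factorization. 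Since $T$ and $S$ are $C_0$-semigroups, both $\|T(x)\|$ on $(0,1]$ and $\|S(y)\|$ on $[0,1]$ are uniformly bounded, so $\|T(x+iy)\|$ is bounded on $D_+$.

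\emph{Converse direction: constructing the boundary semigroup.} Assume $M := \sup_{z \in D_+} \|T(z)\| < \infty$. For $f \in D(A)$ and $0 < y \le 1$, holomorphy gives $T(x+iy)f - T(x'+iy)f = \int_{x'}^x T(s+iy)Af\,ds$, bounded by $M|x-x'|\,\|Af\|$; hence $\{T(x+iy)f\}_{0<x\le 1}$ is Cauchy as $x \downarrow 0$. Define $S(y)f := \lim_{x \downarrow 0} T(x+iy)f$, extended to all of $E$ by density using $\|T(x+iy)\| \le M$, so $\|S(y)\| \le M$. To extend beyond $y = 1$, use the semigroup identity $T(x+iy) = T(x/n + iy/n)^n$ with $n \ge y$: since $T(x/n+iy/n) \to S(y/n)$ strongly and $\|T(x/n+iy/n)\| \le M$, a telescoping argument gives $T(x+iy)f \to S(y/n)^n f$, so $S(y) := S(y/n)^n$ is well defined (independent of $n$). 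The semigroup law $S(y_1+y_2) = S(y_1)S(y_2)$ then follows from $T(x+i(y_1+y_2)) = T(x/2+iy_1)T(x/2+iy_2)$ by passing to the limit, justified by the local uniform bounds.

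\emph{Strong continuity and the generator.} For $f \in D(A)$, integrating along the vertical direction yields $T(x+iy)f - T(x)f = i\int_0^y T(x+it)Af\,dt$. Sending $x \downarrow 0$ (using $T(x)f \to f$ and dominated convergence with majorant $M\|Af\|$ on $0 < y \le 1$) gives the key identity $S(y)f - f = i\int_0^y S(t)Af\,dt$. Hence $\|S(y)f - f\| \le My\,\|Af\|$, so $S(y)f \to f$ on $D(A)$ and by density on all of $E$, yielding strong continuity at $0$. Dividing by $y$ shows that the generator $B$ of $S$ satisfies $D(A) \subset D(B)$ with $B|_{D(A)} = iA$. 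Moreover, $T(x+iy)$ commutes with $A$ on $D(A)$, and the closedness of $A$ combined with the definition of $S(y)$ as a strong limit shows $S(y)D(A) \subset D(A)$ with $AS(y)f = S(y)Af$. Thus $D(A)$ is a dense, $S$-invariant subspace of $D(B)$, hence a core for $B$; since $iA$ is already closed, this forces $B = iA$.

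\emph{Main obstacle.} The principal difficulty is that the boundedness hypothesis is local (only for $|z| \le 1$), whereas one wants a semigroup on $[0, \infty)$. Defining $S(y)$ for $y > 1$ cannot be done by a direct limit $\lim_{x \downarrow 0} T(x+iy)$ without a preliminary bound; this forces the detour through $T(x+iy) = T(x/n+iy/n)^n$ and the verification that the limit is independent of $n$. A secondary delicate point is the final identification $B = iA$: the core argument relies on the commutation $AS(y) = S(y)A$ on $D(A)$, which itself is obtained only by combining the closedness of $A$ with the strong-limit definition of $S(y)$. The symmetric statement for $-iA$ and $D_-$ follows by the obvious reflection.
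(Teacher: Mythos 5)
Your proof is correct, and it is essentially the standard argument: the paper itself does not prove this proposition but quotes it from Hille--Phillips (Theorems 17.9.1 and 17.9.2), and your two-step scheme --- the factorization $T(x+iy)=T(x)S(y)$ via the constancy of $s\mapsto T(x+is)S(y-s)f$ for the forward direction, and, for the converse, the Cauchy-in-$x$ limit on $D(A)$, the $n$-th root trick to pass beyond $y=1$, the integral identity $S(y)f-f=i\int_0^y S(t)Af\,dt$, and the core argument identifying the generator with $iA$ --- is the proof found there and in Arendt--El-Mennaoui--Hieber. The only detail to polish is that the hypothesis bounds $\Vert T(z)\Vert$ only on $\{|z|\le 1\}$, while your estimate $\Vert T(s+iy)Af\Vert\le M\Vert Af\Vert$ uses points with $s^2+y^2>1$; this is repaired by first noting $\sup_{z\in D_+,\,|z|\le 2}\Vert T(z)\Vert\le M^2$ from $T(z)=T(z/2)^2$.
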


This result was extended by El-Mennaoui to cover holomorphic semigroups which  admit as boundary values exponentially bounded integrated semigroups.
The results are presented in \cite[Section 3.14]{ABHN2011} and applied to the Schr\"odinger equation in $L^p$ spaces.

Our focus in the present paper is  on the Riemann-Liouville semigroup which
describes the fractional integration. It is also the basis for the definition of the most commonly used concepts of
fractional derivatives, namely the Riemann-Liouville and the Caputo fractional derivatives. The explicit representation
of this semigroup $(J(z))_{{\rm Re}(z)>0}$ is given by 
\begin{equation}\label{RL_representation}
 J(z)f(t)= \frac{1}{\Gamma(z)}\int_0^t(t-s)^{z-1}f(s) {\rm d}s\qquad (f\in L^1[0,1], t\in (0,1)).
\end{equation}
We will  show that the semigroup $J$ is bounded on
$D:=D_+\cup D_-$ with respect to the norm of the space ${\rm Lip}_\alpha[0,1]$ of all $\alpha-$Lipschitz continuous functions on $[0,1].$
In view of Proposition 1.1
we conclude the existence of a boundary value group on ${\rm lip}_\alpha[0,1]$ which is the  largest subspace of ${\rm Lip}_\alpha[0,1]$  on which $J$ is strongly continuous.\\

The Riemann-Liouville semigroup was studied extensively in \cite[Section 23.16]{HP1957} where it is proved that it is strongly continuous and holomorphic
in $L^p(0,\, 1),\, 1\le p<\infty.$ In particular, a description of its infinitesimal generator is given \cite[Section 23.16.1]{HP1957}. The importance of this semigroup
in spectral and ergodic theory is stressed and it is proved that when $p=2,$ a boundary group exists. The proof in that monograph relies on a lemma by Kober \cite[Lemma 23.16.2]{HP1957}. An interesting result is the explicit computation of the norm of boundary group  $\Vert J(it) \Vert_{L^2}=e^{\frac{1}{2}\vert t\vert},\, t\in \mathbb{R}. $ This means in particular that the boundary group is not uniformly bounded.\\
Let $-B$ be the generator of the translation semigroup on $L^p[0,1].$ In section 3 below we recall the well known fact that $J(z)$ may be identified with $B^{-z} \ ({\rm Re}(z)>0).$
It was proved in \cite{EK1996} (see also \cite[Section 3.14]{ABHN2011}), using
 the transference principle due to R. Coifman and G. Weiss \cite{C-W76}
 and/or \cite{C-W77}
that $J$ is  bounded on $D_\pm$  as a holomorphic semigroup  of
angle $\pi/2$ acting on $L^p[0,1]$ if and only if  $1<p<+\infty.$
Here the notation $D_{\pm}$ will stand henceforth for $D_+$ or $D_-$ and will mean either
the first object or the second one. A similar notation prevails for all mathematic objects along this paper like $E_{\pm}$ and $\pm i\Delta$ in section 2.
It should be noted that a similar result had appeared earlier  (see the two papers \cite{Fisher1967}, \cite{Kalish1971})
using different methods for the proofs. While Kalish relies
on  Kober's results on integral operators on $L^p-$spaces, Fisher uses the Mikhlin multiplier theorem.
The method of Kalish is therefore close to the one used by Hille-Phillips \cite{HP1957} when $p=2.$
The Riemann-Liouville semigroup $J(z)$ may be viewed as the semigroup of fractional powers of the Volterra operator (for details, see e.g. \cite[Section 8.5]{Haase2006}, in particular Theorem 8.5.8).
Let $\mu\in\mathbb{R}.$ The Hadamard type fractional integration operators of order $\alpha>0$ are given by
\begin{equation}\label{Had-mu-i0}
(\mathcal{J}_\mu^\alpha f)(x)=\frac{1}{\Gamma(\alpha)}\int_0^x(\frac{t}{x})^\mu (\ln(\frac{x}{t}))^{\alpha-1} f(t)\frac{dt}{t}\quad  (x>0),
\end{equation}
and
\begin{equation}\label{Had-mu2-i0}
(\mathcal{I}_\mu^\alpha f)(x)=\frac{1}{\Gamma(\alpha)}\int_x^\infty(\frac{x}{u})^\mu (\ln(\frac{u}{x}))^{\alpha-1} f(u)\frac{du}{u}\quad  (x>0).
\end{equation}

The fractional integral \eqref{Had-mu-i0}  (in the case $\mu=0$)   appears first in Hadamard's paper \cite{Had1892} dedicated to the study of fine properties of  functions which are representable by power series.  It is obtained as a modification of the Riemann-Liouville fractional integral. Our approach to the Hadamard fractional derivative seems to be new in that it involves the abstract theory of fractional powers of semigroup generators while the classical approach consists in modifying the Riemann-Liouville fractional integral (see for example \cite[Section 18.3]{SKM1993}). In the case of the Riemann-Liouville semigroup, the connection  with fractional powers of the generator of the translation semigroup is well documented (see e.g. \cite{ABHN2011, Haase2006, SKM1993}).

We prove that in the spaces $X_c$ (see Section 3 below), and for $1<p<\infty$, these semigroups with parameter $\alpha$ are holomorphic of angle $\frac{\pi}{2}$ and for appropriate values of $c,\, \mu$ they admit boundary values on the imaginary axis. We derive the above representations in a way similar to the Riemann-Liouville case described earlier. From this, the semigroup property which was proved in the papers \cite{BKT02,  BKT02b, Kil01} without appeal to semigroup theory (relying instead on direct computation and the use of the Mellin transform), is obtained  as a consequence. More specifically, our proofs are based on the following two operator semigroups
\begin{align*}
(T_1(t)f)(x)&=f(xe^t),\quad   x>0, \, t>0,\\
(T_2(t)f)(x)&=f(xe^{-t}),\quad  0<x<a,\, \, t>0,
\end{align*}
acting on appropriate weighted $L^p$ spaces, where $1\le p<\infty$ and $a\in (0,\, \infty].$ More precisely, we consider the semigroups $(e^{-\mu t}T_j(t)),\, 1=1,2.$ Such semigroups have been studied extensively in connection with spectral theory and asymptotic behavior the Black-Scholes equation $\displaystyle u_t=x^2 u_{xx}+x u_x$ of financial mathematics (see \cite{AdP2002}). This is a degenerate parabolic equation.  The semigroup $(T_1)$ is also important in spectral theory where it is used to provide counterexamples in various contexts (see e.g. \cite[Chapter 5]{ABHN2011},  \cite{AdP2002}). We are able to recover some results of Boyd \cite{Boyd1968} on the powers of the Ces\`aro operator.

In recent years, fractional calculus has gained increasing interest due to its suitability in modeling several phenomena (deterministic or stochastic)
in science and engineering,  most notably phenomena with memory effects such as anomalous diffusion, fractional Brownian motion and problems in material science,
to name a few.  Some information as well references on these topics can be found in \cite{Ma97},  \cite{Pod99}, \cite{Pr93} and \cite{SKM1993}.

The right nilpotent translation semigroup $S$
is  of contraction operators on $C[0,\, 1]$ whose maximal subspace of strong continuity is $C_0[0,\, 1]:=\{f\in C[0,\, 1],\, f(0)=0\}$. The Lipschitz space  \[{\rm Lip}_0^\alpha[0,\ 1]=C_0[0,\,1] \cap {\rm Lip}^\alpha[0,\ 1]= \left\{ f\in C_0[0,\,1],\, \sup_{t\ne s}\frac{\vert f(t)-f(s)\vert}{\vert t-s\vert^\alpha}<\infty\right\}\]  where $0<\alpha<1$ is a subspace of $C_0[0,\, 1]$ invariant under the  semigroup $S$ but strong continuity fails as well. The maximal subspace of ${\rm Lip}_0^\alpha[0,\ 1]$ on which strong continuity holds is the little H\"older space $\rm{lip}_0^\alpha[0,\ 1].$
A more concrete description of the little H\"older space $\rm{lip}_0^\alpha[0,\ 1]$  is the following:
\begin{equation}\label{Lip-delta}
{\rm lip}_0^\alpha[0,\ 1]=\left\{ f\in C_0[0,\, 1],\, \lim_{\delta\to 0}\sup_{0<\vert t-s\vert\le\delta}\frac{\vert f(t)-f(s)\vert}{\vert t-s\vert^\alpha}=0 \right\}.
\end{equation}

The lack of strong continuity in the space $\rm{Lip}_0^\alpha[0,\ 1]$ is no accident.  Indeed, by a theorem of Ciesielski \cite{Cies1960} (see also \cite[Section 2.7]{BB1967}), for each $\alpha\in (0,\, 1),$  the space $\rm{Lip}_0^\alpha[0,\ 1]$ is isomorphic to the space $l^\infty(\mathbb{N})$ of bounded sequences. In spaces of this type (namely Grothendieck spaces with the Dunford-Pettis property), all generators of strongly continuous semigroups are bounded operators (see \cite[Theorem 4.3.18 and Corollary 4.3.19]{ABHN2011}). The typical spaces in this class are the spaces $L^\infty(\Omega,\mu)$ where $(\Omega,\mu)$ is a measure space, in particlular $l^\infty$, the Hardy space $H^\infty(\mathbb{D})$ of bounded holomorphic functions on the open unit disc, as well as the spaces $C(K)$ of continuous function on a compact Hausdorff space when $K$ is extremally disconnected.  On the other hand, another result of Ciesielski from \cite{Cies1960} states that $\rm{lip}_0^\alpha[0,\ 1]$ is isomorphic to the space $c_0$ of complex sequences converging to $0$.

In the present paper, we shall prove that a boundary group exists in the H\"{o}lder spaces  ${\rm lip}_0^\alpha[0,1], 0<\alpha<1.$ The paper is organized as follows. In Section 2, we study boundary values of holomorphic semigroups by analysing existence of boundary values for individual trajectories $T(z)x.$  The semigroups we consider are holomorphic in some sector $\Sigma_\phi:=\{z\in\mathbb{C}\setminus\{0\},\, \vert \arg(z)\vert<\phi\}$
where $\phi\in (0,\,\frac{\pi}{2}]$ is given.  The Coifman-Weiss transference principle is applied to the Hadamard type semigroups in Section 3 to prove the existence of boundary value groups on the imaginary axis.  On the way to this, we are able to recover some results of Boyd \cite{Boyd1968}  on the powers of the Ces\`aro operator.   We prove existence of the boundary for the Riemann-Liouville semigroup  as our main result in Section 4 by direct estimates allowing us to apply Proposition 1.1.   In the final Section 5, we use a spectral approach to study the question of which groups are boundary value groups. In particular, a generalization of the spectral decomposition in \cite{EM1994_bis} will be established in the more general spectral situation. Precisely, we obtain a direct suitable space decomposition when the spectrum of the generator is assumed to be union of two  connected components which  belong disjointly to $\mathbb C_+$ and $\mathbb C_-$.

\section{Boundary values of a holomorphic semigroup}
\setcounter{theorem}{0}
 \setcounter{equation}{0}

We first  make precise what we  mean  by boundary values of
holomorphic semigroups. Let $A$ be a linear operator in a complex Banach
space $E.$ For $\theta\in (0,\pi]$ let $\Sigma_\theta$ be the sector in the complex plane:
$$\Sigma_\theta :=\{z\in\mathbb{C};\ z\neq 0, \ \vert {\rm
arg}(z)\vert <\theta\}.$$ We recall that $A$ generates a
\textit{bounded holomorphic semigroup of angle} $\phi\in(0,\pi/2]$
if $A$ generates a $C_0-$semigroup $T$ which has a bounded
extension  to each subsector  $\Sigma_{\phi'}$ (where $0<\phi'<\phi$) of the sector $\Sigma_\phi.$
The bound will in general depend on $\phi'.$ Then by analytic continuation, the holomorphic extension, still
denoted by $T$, satisfies the semigroup property: $T(z+z')=T(z)T(z')$ for $z,z'\in
\Sigma_\phi$  and $\lim_{z\rightarrow 0, z\in\Sigma_\phi}T(z)=I$
in the strong operator topology.    The operator  $A$  generates a \textit{holomorphic
semigroup $T$ of angle $\phi$} if for all $\theta \in (0,\phi)$
there exists $\omega\geq 0$ such that $A-\omega I$ generates a
bounded holomorphic semigroup $T_\theta$ of angle $\theta$ (and
then $T(z) = e^{\omega z}T_\theta(z), \ z\in\Sigma_\theta).$ Then
for $\theta \in (-\phi,\phi),\ (T(te^{i\theta}))_{t\geq 0}$ is a
$C_0-$semigroup and its generator is $e^{i\theta}A.$
Conversely, suppose $A$ generates a $C_0-$semigroup. Then,  if for each $\theta \in (-\phi,\phi) $ where $\phi\in (0,\,\frac{\pi}{2}]$, the operator $e^{i\theta}A$
is the generator of a $C_0-$semigroup of contractions, then and only then, $A$ generates a holomorphic semigroup of contractions of angle $\phi$ (see \cite[Theorem 1.54]{Ka2010} where this and similar results are presented).

Due to their importance in the area of partial differential equations (more precisely, those of parabolic type) and in the theory of stochastic processes, holomorphic semigroups have been extensively studied from the early days of the theory of strongly continuous semigroups.  We refer to   \cite{ABHN2011, BB1967, EN2000, HP1957, Pazy83} for more information on the subject. Throughout this paper  we denote respectively by $D(A)$, $\sigma(A)$ and
$\rho(A)$ the domain, the spectrum and the resolvent set of $A.$
In this first section we are interested in the boundary values of
holomorphic semigroups. In this context, we reformulate Proposition 1.1 as:

\begin{proposition}\label{Proposition 2.1} Let $A$ be the generator of a holomorphic
$C_0-$semigroup $T$  of angle $\phi \in(0,\pi/2].$ Then $T(te^{\pm
i\phi})x:=\lim_{\epsilon \rightarrow 0}T(\epsilon +te^{\pm
i\phi})x $ exists uniformly in $t\in[0, 1]$ for all $x\in E$ if
and only if
$$ \sup_{z\in \Sigma_\phi \cap D_{\pm}} \Vert T(z)\Vert<\infty.$$
\end{proposition}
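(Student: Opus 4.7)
The plan follows the classical argument of Hille--Phillips \cite[Chapter 17]{HP1957}, reformulated for the sector $\Sigma_\phi$.

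\textit{Forward direction.} Assume $T(te^{\pm i\phi})x=\lim_{\epsilon\to 0^+}T(\epsilon+te^{\pm i\phi})x$ exists uniformly in $t\in[0,1]$ for every $x\in E$. Uniform convergence makes $t\mapsto T(te^{\pm i\phi})x$ continuous on $[0,1]$, so the uniform boundedness principle yields $\sup_{t\in[0,1]}\|T(te^{\pm i\phi})\|<\infty$. An iterated-limit computation using the semigroup identity of $T$ (together with the boundedness of $T(\epsilon+se^{\pm i\phi})$ for small $\epsilon>0,\,s\geq 0$) shows that $t\mapsto T(te^{\pm i\phi})$ extends to a $C_0$-semigroup on $[0,\infty)$ with generator $\pm iA$, hence is bounded on every compact interval. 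For $z\in\Sigma_\phi\cap D_\pm$ I write
\[
z=a+be^{\pm i\phi},\qquad a=\Re z-|\Im z|\cot\phi\in(0,1],\quad b=|\Im z|/\sin\phi\in(0,1/\sin\phi],
\]
so that $T(z)=T(a)T(be^{\pm i\phi})$ delivers the required uniform bound on $\Sigma_\phi\cap D_\pm$.

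\textit{Converse.} Assume $M:=\sup_{z\in\Sigma_\phi\cap D_\pm}\|T(z)\|<\infty$, and set $\mathcal{D}:=\{T(s)y:s>0,\,y\in E\}$, which is dense in $E$ by the strong continuity of $T$. For $x=T(s)y\in\mathcal{D}$, the identity $T(\epsilon+te^{\pm i\phi})x=T(\epsilon+s+te^{\pm i\phi})y$ together with the fact that the arguments $\epsilon+s+te^{\pm i\phi}$ ($\epsilon\in[0,\epsilon_0]$, $t\in[0,1]$) lie in a compact subset of $\Sigma_\phi$ with real part $\geq s>0$, where $z\mapsto T(z)y$ is uniformly continuous, gives
\[
\lim_{\epsilon\to 0^+}T(\epsilon+te^{\pm i\phi})x=T(s+te^{\pm i\phi})y\quad\text{uniformly in }t\in[0,1].
\]
To extend this to all $x\in E$, I observe that $\epsilon+te^{\pm i\phi}=2\bigl((\epsilon+te^{\pm i\phi})/2\bigr)$ with $(\epsilon+te^{\pm i\phi})/2\in\Sigma_\phi\cap D_\pm$ for $\epsilon\in(0,1]$, $t\in[0,1]$, so the semigroup identity yields $\|T(\epsilon+te^{\pm i\phi})\|\leq M^2$. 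A standard $3\delta$-argument (approximating $x$ by $x_\delta\in\mathcal{D}$ with $\|x-x_\delta\|<\delta$ and splitting $\|T(\epsilon+te^{\pm i\phi})x-T(\epsilon'+te^{\pm i\phi})x\|$ into three pieces controlled by the uniform convergence on $\mathcal{D}$ and the uniform operator bound $M^2$) produces a uniform Cauchy condition in $t\in[0,1]$; completeness of $E$ furnishes the uniform limit.

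\textit{Main obstacle.} The principal subtlety is in the forward direction: the set $\Sigma_\phi\cap D_\pm$ contains points with $|\Im z|$ close to $1$, so boundedness of the boundary semigroup on $[0,1]$ alone does not suffice—one needs it on $[0,1/\sin\phi]$. This is overcome by first promoting the boundary-ray family to a $C_0$-semigroup on $[0,\infty)$ via the compatibility of limits granted by the semigroup property, after which boundedness on compact subintervals is automatic. All other steps are routine.
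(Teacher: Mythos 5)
Your proof is correct and follows the same route the paper implicitly relies on: the paper offers no proof of its own for this proposition, citing \cite[Propositions 1.1 and 1.2]{AEH1997}, and your argument---promoting the boundary family $(T(te^{\pm i\phi}))_{t\in[0,1]}$ to a $C_0$-semigroup on $[0,\infty)$ so as to handle the decomposition $z=a+be^{\pm i\phi}$ with $b$ ranging up to $1/\sin\phi$ in the forward direction, and a density-plus-uniform-bound $3\delta$ argument using $T(\epsilon+te^{\pm i\phi})=T\bigl((\epsilon+te^{\pm i\phi})/2\bigr)^2$ in the converse---is exactly the standard content of those propositions. Two cosmetic points only: the generator of the boundary semigroup is $e^{\pm i\phi}A$ rather than $\pm iA$ unless $\phi=\pi/2$ (harmless, since you only use that a $C_0$-semigroup is bounded on compact intervals), and when $t=0$ the point $(\epsilon+te^{\pm i\phi})/2$ is real and hence not in $D_\pm$, so the bound $\sup_{s\in(0,2]}\Vert T(s)\Vert<\infty$ coming from strong continuity on the real axis must be invoked separately there.
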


Proposition \ref{Proposition 2.1} is an obvious combination of \cite[Proposition
1.1, Proposition 1.2]{AEH1997}. In fact, in this case, the operators  $(T(te^{\pm
i\phi}))_{0\leq t\leq 1}$ can then be canonically extended to
the half line $[0,\infty)$ as $C_0-$semigroups. We call these
semigroups $(T(te^{\pm i\phi}))_{t\geq 0}$ \textit{the boundary
values of $T.$}  \\
When we deal with parabolic partial differential equations in $L^p$ spaces, for instance,
the holomorphic semigroup $T$ may not be locally bounded on
$\Sigma_\phi $ but the trajectories for smooth initial data may be
locally bounded. In order to assign   also
boundary values, to these semigroups,  we consider for every holomorphic
semigroup $T$ of angle $\phi$  the $T-$invariant  subspaces shortly denoted$E^T$ and $E_T$: \\
$$E^T:=\{ x\in E;\ \lim_{\epsilon \rightarrow
0}T(\epsilon +te^{ i\phi})x \hbox{ converges uniformly for } t
\hbox{ in compact subsets of } [0,+\infty) \},$$
$$E_T:=\{ x\in E;\ \lim_{\epsilon \rightarrow 0}T(\epsilon +te^{ -i\phi})x \hbox{
converges uniformly for } t \hbox{ in compact subsets of }
[0,+\infty) \},$$ where the convergence   is to  be understood  in
$E$. As a consequence of the above uniform convergence we also have
$x\in  E^T$ (respectively $x\in  E_T$ )  if and only if $T(\epsilon+z)x$ converges
uniformly for $z$ in the compact subsets of $\Sigma_\phi^+:=\{z\in
\Sigma_\phi;\   \hbox{Im}(z)\geq 0 \}$ (respectively  $\Sigma_\phi^-:=\{z\in
\Sigma_\phi;\   \hbox{Im}(z)\le 0 \}$).
We then set $T(te^{\pm  i\phi})x:=\lim_{\epsilon \rightarrow
0}T(\epsilon +te^{ \pm i\phi})x$ for all $x\in  E^T \cap E_T$.

\begin{proposition} Let $A$ be the generator of a holomorphic
$C_0-$semigroup $T$  of angle $\phi \in(0,\pi/2]$ and
$E^T, E_T$ be the subspaces defined above. Then:\\
i) $E^T \cap E_T$ is dense in $E.$\\
ii) For all $x\in E^T$, $z, z'\in \overline{\Sigma}_{\phi}^+ $ (respectively $x\in E_T$, $z, z'\in \overline{\Sigma}_{\phi}^- $) 
 we have $T(z')T(z)x =T(z+z')x $ and
$\overline{\Sigma}_\phi^+\ni z\mapsto T(z)x$ (respectively $\overline{\Sigma}_\phi^-\ni z\mapsto T(z)x$ ) is continuous with
values in $E.$
\end{proposition}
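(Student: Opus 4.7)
The plan is to deduce both statements from the uniform-convergence characterization of $E^T$ recalled just before the proposition, together with the semigroup identity on the open sector. I treat $E^T$; the case of $E_T$ is entirely analogous.

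\textbf{Part (i).} The key claim is that $T(\epsilon)x\in E^T\cap E_T$ for every $x\in E$ and every $\epsilon>0$. Fixing $x,\epsilon$ and $M>0$, the map $t\mapsto F(t):=T(\epsilon+te^{i\phi})x$ is continuous on $[0,M]$ by holomorphy on the open sector, so the set $K=F([0,M])$ is compact in $E$. Using the semigroup law on the interior, $T(\epsilon'+te^{i\phi})T(\epsilon)x=T(\epsilon')F(t)$, and since $(T(\epsilon'))_{0\le\epsilon'\le 1}$ is uniformly bounded and strongly continuous at $0$, we have $(T(\epsilon')-I)y\to 0$ as $\epsilon'\to 0$ uniformly for $y\in K$. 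This yields uniform convergence in $t\in[0,M]$ and shows $T(\epsilon)x\in E^T$; the symmetric argument gives $T(\epsilon)x\in E_T$. Density is then immediate since $T(\epsilon)x\to x$ as $\epsilon\to 0^+$ by the $C_0$ property.

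\textbf{Part (ii).} I would first establish the invariance $T(z)x\in E^T$ for $x\in E^T$ and every $z\in\overline{\Sigma}_\phi^+$. For $z$ in the open sector, the semigroup identity on $\Sigma_\phi$ gives $T(\epsilon+se^{i\phi})T(z)x=T(\epsilon+z+se^{i\phi})x$, and the segment $\{z+se^{i\phi}:s\in[0,M]\}$ is a compact subset of $\Sigma_\phi^+$, so uniform convergence in $s$ follows from $x\in E^T$. When $z=s_0 e^{i\phi}$ lies on the upper ray, write $y=T(z)x=\lim_{\eta\to 0}T(\eta+s_0 e^{i\phi})x$ and use continuity of the bounded operator $T(\epsilon+se^{i\phi})$ followed by the interior semigroup law to obtain
\[
T(\epsilon+se^{i\phi})y=\lim_{\eta\to 0}T(\epsilon+\eta+(s+s_0)e^{i\phi})x=T(\epsilon+(s+s_0)e^{i\phi})x.
\]
Letting $\epsilon\to 0$ and invoking $x\in E^T$ on the translated compact $[s_0,s_0+M]$ delivers both $y\in E^T$ and the boundary identity $T(se^{i\phi})T(s_0 e^{i\phi})x=T((s+s_0)e^{i\phi})x$.

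With the invariance in hand, the general identity $T(z')T(z)x=T(z+z')x$ for $z,z'\in\overline{\Sigma}_\phi^+$ follows by a short case analysis on whether $z,z'$ are interior or on the upper ray, each case reducing via the invariance to passing one of the two approximating limits through a bounded operator. Continuity of $z\mapsto T(z)x$ on $\overline{\Sigma}_\phi^+$ is then deduced from the homeomorphism $(\epsilon,s)\mapsto\epsilon+se^{i\phi}$ from $[0,\infty)^2$ onto $\overline{\Sigma}_\phi^+$: the functions $T(\epsilon_0+\cdot)x$ are continuous in this parametrization for each $\epsilon_0>0$, they converge uniformly on compact subsets as $\epsilon_0\to 0$ (the ray $\epsilon=0$ being handled by the characterization of $E^T$, and the corner $(0,0)$ by the $C_0$ property), so the limit $T(\cdot)x$ is continuous. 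The main obstacle I anticipate is the bookkeeping required to interchange the two approximating limits $\epsilon,\eta\to 0$ without leaving the domains on which the operators are defined; this is uniformly handled by always pushing the outer limit through the bounded operator first and only then letting the inner parameter tend to zero.
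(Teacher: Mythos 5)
Your proof is correct and follows essentially the same route as the paper: part (i) via showing $T(\epsilon)x\in E^T\cap E_T$ and letting $\epsilon\to 0^+$ by the $C_0$ property, and part (ii) by passing the approximating limit through the bounded operator and using the semigroup law on the open sector, with continuity deduced from the uniform convergence. You are in fact slightly more complete than the paper, whose proof of (ii) only verifies $T(z')T(z)x=T(z+z')x$ for $z'$ in the open sector $\Sigma_\phi$ and leaves the case where both $z$ and $z'$ lie on the boundary ray implicit, whereas your invariance step handles it explicitly.
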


\begin{proof}
(i) Let $x\in E$. For all $t_0>0$ since $T$ is strongly uniformly
continuous on the compact subsets of the sub-sectors
$\Sigma_{\theta },\ 0<\theta< \phi$  we have $T(t_0)x\in E^T
\cap E_T $ and $T(t_0)x\rightarrow x$ as
$t_0\rightarrow 0.$ The claim follows.\\
(ii) Let $z\in \overline{\Sigma}_\phi^+ $, $z'\in {\Sigma}_{\phi}$
and $x\in E^T.$  We have
 $T(z)x =\lim\limits_{\epsilon \rightarrow 0}T(\epsilon +z)x$ uniformly for $z$ in the compact subsets of $\Sigma_\phi^+$.
 Then
\begin{align*}T(z')T(z)x&=\lim\limits_{\epsilon \rightarrow
0}T(z')T(\epsilon+z)x\\&=\lim\limits_{\epsilon \rightarrow
0}T(\epsilon+z+z')x =T(z+z')x
\end{align*} since $z+z'\in {\Sigma}_\phi.$ The
 continuity of $z\mapsto T(z)x$ on $\overline{\Sigma}_\phi^+$
 follows from the uniform convergence. The same argument prevails for $x\in E_T$ and $z\in \overline{\Sigma}_\phi^-$.
\end{proof}

 The above proof uses the fact that $\bigcup_{t>0}T(t)(E)$ is dense in $E$, which follows from the $\mathcal C_0-$ property.
 We mention that for general holomorphic semigroups, more
 is true: $\bigcap_{t>0}T(t)(E)$ is dense in $E$( see e.g. \cite{CK2000}).

\begin{example}
Let $E=l^2(\mathbb{N})$ and $(A,D(A))$ be given by 
\begin{align*}
A(x_n):&=\big((-n+in+i\ln(n+1))x_n\big)\\
D(A):&=\big\{(x_n)\in l^2(\mathbb{N})\mid \ (nx_n)\in l^2(\mathbb{N})\big\}.
\end{align*}
  Then $A$ with domain $D(A)$ generates a holomorphic $C_0-$semigroup $T$ of angle $\pi/4$ given
by \[T(z)(x_n):=\big(\exp({(-n+in+i\ln(n+1))z})x_n\big)\] for all $(x_n)\in l^2(\mathbb{N})$ and $z\in\Sigma_{\pi /4}.$ Moreover, we have
$E_+^T=E\ $ and $E_-^T=\{ (x_n)\in l^2(\mathbb{N}), (n^\alpha x_n)\in l^2(\mathbb{N}) \hbox{ for all } \alpha
 >0\}.$ In fact,  for all  $z\in \Sigma_{\frac{\pi}{4}}$ and all integer $n$ we have
\begin{align*}
    \|T(z)(x_n)\|=&\| \exp{((-n+in+i\ln(n+1))z})x_n\| \\
    =& \|\exp{((-\sqrt{2} n e^{-\frac{i\pi}{4}}+i\ln(n+1))z})x_n\|
\end{align*}

and for the upper boundary values $z=te^{i\frac{\pi}{4}}, \ t\ge 0,$ we obtain 
\begin{align*}
    \|T(z)(x_n)\|=&\|\exp((-n+in+i\ln(n+1))te^{i\frac{\pi}{4}})x_n\|  \\
    =&\|\exp{((-n\sqrt{2}e^{-i\frac{\pi}{4}}+i\ln(n+1))te^{i\frac{\pi}{4}}})x_n\|  \\
    =& \|\exp(-n\sqrt{2}t-\frac{\ln(n+1)}{\sqrt{2}}t)x_n\|.
   \end{align*}
From this it follows  that $T(te^{i\frac{\pi}{4}})$ is well defined for all $t\ge 0$ and all $(x_n)\in E,$ which means that $E^T=E.$ \\
    Let us now examine the  behavior of the semigroup for lower boundary values $z=te^{-i\frac{\pi}{4}}, t\geq 0.$ For such values $z$ we  obtain 
\begin{align*}
    \|T(z)(x_n)\|=&\|\exp((-n+in+i\ln(n+1))te^{-i\frac{\pi}{4}})x_n\|  \\
    = & \|\exp((ni\sqrt{2}t) +e^{t\frac{\ln(n+1)}{\sqrt{2}}})x_n\|  \\
     = & \| e^{t\frac{\ln(n+1)}{\sqrt{2}}}x_n\| \\
     = & \left(\displaystyle\Sigma_{n\ge 0} (n+1)^{2t/\sqrt{2}}|x_n|^2\right)^{1/2}.
  \end{align*}

In order to ensure the existence of   boundary values for semigroup $T(z)_{\rm Re(z)\ge 0}$, one must have $(n^{\alpha}x_n)_n\in  l^2(\N)$ for all
$\alpha\ge 0$ and all $(x_n) \in l^2(\N).$ This is obviously not the case for an arbitrary  $x$ in $ l^2(\N)$.  However, we  conclude
that \[E_T=\{(x_n)_n\in  l^2(\N) \ \mbox{such that}\ (n^{\alpha}x_n)_n\in  l^2(\N), \ \forall \alpha>0\}.\]
This is a dense subspace of $l^2(\N)$ as it contains the finitely supported sequences.
\end{example}

This example shows that the semigroup lives until a maturity which depends on the regularity of
the initial data. So, for more regular positions (e.g $x\in D^{\infty}=\cap_{n\ge 0}D(A^n)$)
such as $x=(e^{-n})_{n\ge 0}$, the limit $\lim_{\epsilon\rightarrow 0}T(\epsilon+te^{-i\frac{\pi}{4}})$
exists uniformly for all $t\ge 0$ in compact subsets of $\R^+.$


Now,  $t\mapsto T(te^{+ i\phi})$ and $t\mapsto T(te^{- i\phi})$ seen as functions on
$[0,+\infty)$ with values respectively  in the spaces $ {\mathcal L}(E^T,
E)$ and $ {\mathcal L}(E_T,
E),$ where $E^T$ and $E_T$ are equipped with an adequate topology, are
called \textit{the boundary values of $T$}. The cases when
$E^T=E$ (resp. $E_T=E$) or $D(A^n)\subset E^T$ (resp. $D(A^n)\subset E_T$) for some integer $n\geq 1$
are of particular interest. The first case is characterized in
Proposition 1.1. The second one, which includes many partial differential equations, is the
subject of the following proposition.

\begin{proposition} Let $A$ be the generator of a holomorphic
$C_0-$semigroup $T$  of angle $\phi \in(0,\pi/2]$ and $n\geq 1 $
be an integer. Then $D(A^n)\subset  E^T$ (resp. $D(A^n)\subset  E_T$)  if and only if $ \Vert
T(z)\Vert_{{\mathcal L}([D(A^n)],E)}$ is locally bounded on
$\Sigma_\phi^+$ (resp.$\Sigma_\phi^-$).
\end{proposition}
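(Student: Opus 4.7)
The plan is to prove each implication separately, in both cases by applying the Banach--Steinhaus theorem with the Banach space $[D(A^n)]$ (equipped with the graph norm) playing the role of the domain. I treat only the $\Sigma_\phi^+$, $E^T$ case; the $\Sigma_\phi^-$, $E_T$ case is symmetric. For the ``only if'' direction, assume $D(A^n) \subset E^T$. By Proposition 2.2(ii) the map $z \mapsto T(z)x$ is continuous on $\overline{\Sigma}_\phi^+$ for every $x \in D(A^n)$, so $\{T(z)x : z \in K\}$ is bounded in $E$ for each compact $K \subset \overline{\Sigma}_\phi^+$. Before invoking uniform boundedness on this compact family, I must verify that each $T(z_0)$ with $z_0$ on the upper boundary ray really defines a bounded operator from $[D(A^n)]$ into $E$. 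I would do this by fixing a sequence $\epsilon_k \downarrow 0$: each $T(\epsilon_k + z_0)$ already lies in $\mathcal{L}([D(A^n)], E)$, and the pointwise convergence $T(\epsilon_k + z_0)x \to T(z_0)x$ for $x \in D(A^n) \subset E^T$ combined with Banach--Steinhaus yields $\sup_k \|T(\epsilon_k + z_0)\|_{\mathcal{L}([D(A^n)], E)} < \infty$. Lower semicontinuity of the norm then places $T(z_0)$ in $\mathcal{L}([D(A^n)], E)$. A second application of Banach--Steinhaus over the compact $K$ now delivers the desired $\sup_{z \in K}\|T(z)\|_{\mathcal{L}([D(A^n)], E)} < \infty$.

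For the converse, assume $\|T(z)\|_{\mathcal{L}([D(A^n)], E)}$ is locally bounded on $\Sigma_\phi^+$. The first step is to observe that $Y := \bigcup_{t_0 > 0} T(t_0) D(A^n)$ is dense in $[D(A^n)]$: for $x \in D(A^n)$ the identity $A^n T(t_0)x = T(t_0) A^n x$ together with the $C_0$-property gives $T(t_0)x \to x$ both in $E$ and in $E$ after application of $A^n$, so convergence holds in the graph norm. For any $y = T(t_0) x_0 \in Y$, the identity $T(\epsilon + t e^{i\phi}) y = T(t_0 + \epsilon + t e^{i\phi}) x_0$ together with continuity of $T$ on the open sector $\Sigma_\phi$ immediately produces a uniform-in-$t$ limit on compact subsets of $[0, \infty)$ as $\epsilon \to 0^+$, so $Y \subset E^T$. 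For arbitrary $x \in D(A^n)$ I would then run a three-$\epsilon$ argument: given $\eta > 0$, choose $y \in Y$ with $\|x - y\|_{D(A^n)} < \eta/(3M)$ where $M$ is a local bound from the hypothesis, use the already-established convergence for $y$ to control the middle term, and conclude that $T(\epsilon + t e^{i\phi}) x$ is Cauchy in $E$ uniformly in $t$ on compact sets, placing $x$ in $E^T$.

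The principal obstacle is the rather delicate point in the ``only if'' direction that a boundary operator $T(z_0)$ is \emph{a priori} defined merely as a pointwise strong limit on $E^T$, so its boundedness with respect to the $[D(A^n)]$ graph norm is not automatic; it must be extracted via Banach--Steinhaus applied along a sequence of interior approximants, rather than by direct appeal to the compactness of $K$. Once this point is settled, both directions reduce to standard density/approximation arguments, and the symmetric case of $E_T$ and $\Sigma_\phi^-$ requires no new ideas.
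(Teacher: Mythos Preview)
Your proof is correct and follows the same strategy as the paper --- Banach--Steinhaus for the forward direction and density plus local boundedness for the converse --- the only cosmetic difference being that the paper works throughout with $T(z)(\lambda_0-A)^{-n} \in \mathcal{L}(E)$ (via the isomorphism $(\lambda_0-A)^{-n}\colon E \to [D(A^n)]$) and invokes the density of $E^T$ in $E$ rather than your density of $\bigcup_{t_0>0}T(t_0)D(A^n)$ in $[D(A^n)]$. One remark: since $\Sigma_\phi^+$ lies inside the \emph{open} sector $\Sigma_\phi$, every $T(z)$ with $z \in \Sigma_\phi^+$ is already in $\mathcal{L}(E)$, so your ``principal obstacle'' concerning boundary operators $T(te^{i\phi})$ is not actually needed for the statement as written --- you may apply Banach--Steinhaus directly to the family $\{T(z): z \in \Sigma_\phi^+,\ |z|\le R\}$ and skip the $\epsilon_k$-approximation step (what you prove is the slightly stronger local boundedness on $\overline{\Sigma}_\phi^+$, which is fine but not required).
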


\begin{proof}
We prove the claim for $E^T$ and $\Sigma_\phi^+$. Analogously, the same argument  remains true for $E_T$ and $\Sigma_\phi^-.$ Assume that $n\geq 1$ and $D(A^n)\subset E^T$ and let
$\lambda_0\in \rho(A)$.  Then $\lim_{\epsilon \rightarrow
0}T(\epsilon+z)(\lambda_0-A)^{-n}x$ converges for all $x\in E$
uniformly for $z$ in compact subsets of
$ \overline{\Sigma}_\phi^+.$ It follows, by the uniform bounded principle   $\Vert
T(\epsilon+z)(\lambda_0-A)^{-n}\Vert$  is locally bounded
in $ \overline{\Sigma}_\phi^+$ .\\
Conversely, if  $ T(z)(\lambda_0-A)^{-n}$ is locally bounded in
$\Sigma_\phi\cap \{ z;\ \pm \hbox{ Im }(z)\geq 0\},$ since
$E_\pm^T$ is dense it follows that $\lim_{\epsilon \rightarrow
0}(\lambda_0-A)^{-n}T(\epsilon+t\exp(+ i
\phi))x=(\lambda_0-A)^{-n}\lim_{\epsilon \rightarrow
0}T(\epsilon+t\exp(+ i\phi))x$ exists for all $x\in E$ uniformly
for $t$ in compact subsets of $[0,+\infty).$

\end{proof}

In practice the estimate on  $ \Vert
T(z)\Vert_{{\mathcal L}([D(A^n)],E)}$ is not easy to establish and one prefers to estimate $ \Vert
T(z)\Vert_{\mathcal L(E)}. $ The following proposition gives a sufficient condition on
 $ \Vert T(z)\Vert_{\mathcal L(E)} $ to guarantee $D(A^n)\subset E^T\cap E_T.$

\begin{proposition}\label{pro_scda} Let $A$ be the generator of a holomorphic
$C_0-$semigroup $T$  of angle $\phi \in(0,\pi/2]$ and $n\geq 1 $
be an integer.   Assume that there exists some $\gamma \in [0,n)$
such that the function $z\mapsto (\vert z \vert d(z,\partial \Sigma_{ \phi}))^\gamma  \Vert
T(z)\Vert_{{\mathcal L}(E)} $ is locally bounded on $
\Sigma_\phi.$  Then $D(A^n)\subset E^T\cap E_T.$
\end{proposition}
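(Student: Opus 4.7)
The plan is to exploit the regularity $x \in D(A^n)$ via a Taylor expansion with integral remainder to manufacture a continuous extension of $w \mapsto T(w)x$ from $\Sigma_\phi$ to $\overline{\Sigma}_\phi \setminus \{0\}$. By the definitions of $E^T$ and $E_T$, together with the $C_0$-continuity at the origin, this directly yields $D(A^n) \subset E^T \cap E_T$.

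Fix an interior point $z_0 \in \Sigma_\phi$. For $x \in D(A^n)$ the map $z \mapsto T(z)x$ is holomorphic on $\Sigma_\phi$ with $\frac{d^k}{dz^k} T(z)x = T(z) A^k x$ for $0 \leq k \leq n$. Parametrising the segment $[z_0,w]$, Taylor's formula with integral remainder reads
\begin{equation*}
T(w)x = \sum_{k=0}^{n-1} \frac{(w-z_0)^k}{k!}\, T(z_0) A^k x + \frac{(w-z_0)^n}{(n-1)!} \int_0^1 (1-s)^{n-1}\, T\bigl(z_0 + s(w-z_0)\bigr) A^n x \, ds
\end{equation*}
for $w \in \Sigma_\phi$. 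Since $\phi \leq \pi/2$, the sector $\Sigma_\phi$ is convex, so $[z_0,w] \subset \overline{\Sigma}_\phi$ for every $w \in \overline{\Sigma}_\phi$, with the open part contained in $\Sigma_\phi$. The polynomial summands are trivially continuous in $w$, so the question reduces to controlling the integral remainder as $w$ approaches the boundary.

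The key geometric estimate I would use is that, writing $\zeta_s := (1-s)z_0 + sw$, for every $s \in [0,1)$ and $w \in \overline{\Sigma}_\phi$,
\begin{equation*}
d(\zeta_s, \partial \Sigma_\phi) \geq (1-s)\, d(z_0, \partial \Sigma_\phi).
\end{equation*}
This follows from convexity: with $d_0 := d(z_0, \partial \Sigma_\phi)$, the ball $B(z_0,d_0)$ lies in $\Sigma_\phi$, hence its rescaled translate $B(\zeta_s,(1-s)d_0) = (1-s)B(z_0,d_0) + sw$ still lies in $\Sigma_\phi$. For $w$ in a compact subset $K \subset \overline{\Sigma}_\phi \setminus \{0\}$, the modulus $|\zeta_s|$ is bounded above and below by positive constants uniformly in $s$. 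Combined with the hypothesis, this gives $\|T(\zeta_s)\|_{\mathcal L(E)} \leq C_K (1-s)^{-\gamma}$ uniformly in $w \in K$ and $s \in [0,1)$, so the integrand in the remainder is dominated by $C_K \|A^n x\| (1-s)^{n-1-\gamma}$, which is integrable on $[0,1]$ precisely because $\gamma < n$.

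Dominated convergence then makes the right-hand side of the Taylor formula continuous in $w$ on $\overline{\Sigma}_\phi \setminus \{0\}$, providing the sought-after continuous extension of $T(\cdot)x$, and this extension agrees with $T(w)x$ on $\Sigma_\phi$ by construction. For $z = te^{\pm i\phi}$ with $t$ in a compact subset of $(0,\infty)$, uniform continuity of the extension on a slightly enlarged compact forces $T(\epsilon+z)x \to T(z)x$ uniformly as $\epsilon \to 0$; uniformity at $t=0$ is taken care of by the $C_0$-property of $T$. Thus $x \in E^T \cap E_T$. The main obstacle will be the clean derivation of the convexity-based distance lower bound together with the uniformity of the bound on $\|T(\zeta_s)\|$ over $w \in K$; once these are in place, the singular factor $(1-s)^{-\gamma}$ is exactly absorbed by the smoothing factor $(1-s)^{n-1}$ furnished by the $n$-th order Taylor remainder.
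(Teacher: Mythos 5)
Your proof is essentially the paper's proof: both rest on an order-$n$ Taylor formula with integral remainder, with the hypothesis $\gamma<n$ used exactly to make the boundary singularity of $\Vert T(\zeta)\Vert$ integrable against the $n$-fold smoothing of the remainder. The differences are in execution. You expand about an interior point $z_0$ and integrate over the straight segment $[z_0,w]$, controlling the integrand by the convexity estimate $d(\zeta_s,\partial\Sigma_\phi)\ge(1-s)\,d(z_0,\partial\Sigma_\phi)$ together with the lower bound on $\vert\zeta_s\vert$ over a compact $K\subset\overline{\Sigma}_\phi\setminus\{0\}$; the paper instead expands about $0$ (writing $T(z)A^{-n}x$ after normalizing $0\in\rho(A)$), integrates over a real segment followed by a circular arc, and uses the explicit formula $d(z,\partial\Sigma_\phi)=\vert z\vert\sin(\phi-\vert\arg z\vert)$ on the arc. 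Your convexity bound is cleaner and both estimates are correct. You also conclude differently: you build the continuous extension of $T(\cdot)x$ to $\overline{\Sigma}_\phi\setminus\{0\}$ directly, whereas the paper proves local boundedness of $\Vert T(z)\Vert_{\mathcal L([D(A^n)],E)}$ and falls back on the preceding proposition (density of $\bigcup_{t_0>0}T(t_0)E$ plus uniform boundedness). These are interchangeable routes; yours is self-contained, theirs recycles an already-proved equivalence.

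The one step you dispatch too quickly is the uniformity for $t$ near $0$: the definition of $E^T$ and $E_T$ requires uniform convergence of $T(\epsilon+te^{\pm i\phi})x$ on compact subsets of $[0,\infty)$, hence on $[0,\delta]$. The $C_0$-property only gives $T(t)x\to x$ along $(0,\infty)$; what is actually needed is a uniform Cauchy estimate for $T(z)x$ as $z\to 0$ unrestrictedly in $\Sigma_\phi$. Your own bound degenerates there, since for $w$ near $0$ one only gets $\vert\zeta_s\vert\gtrsim(1-s)$ as well, so $\Vert T(\zeta_s)\Vert\lesssim(1-s)^{-2\gamma}$ and the remainder is controlled only for $\gamma<n/2$. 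To be fair, the paper's proof is equally silent on this point (its compact $K$ is kept away from the origin, and the issue is buried in the appeal to the earlier proposition). You should either close this by the density/uniform-boundedness argument of Proposition 2.4 restricted to $\{z\in\Sigma_\phi:\vert z\vert\le\delta\}$, or at least state explicitly that the behavior at $z=0$ requires a separate argument rather than attributing it to strong continuity on the half-line.
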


 Observe first that for all $z$ in the sector $\Sigma_{\phi}$, one has $d(z,\partial \Sigma_{ \phi})=\vert\sin(\phi
-\vert{\rm  arg }(z)\vert)\vert.$ This identity will be used in the following proof.
\begin{proof} Without loss of generality we can assume that $0\in
\rho(A).$ Let $n\geq 1,\ x\in E$ and $z\in \Sigma_{ \phi}^+ $. By
the Taylor formula we have 
 $$T(z)A^{-n}x=A^{-n}x+zA^{-n+1}x+...+\frac{z^{n-1}A^{-1}x}{(n-1)!}+
 \frac{1}{(n-1)!}\int_0^z (z-\zeta)^{n-1}T(\zeta)xd\zeta.$$
 It suffices then to  show that $z\mapsto
 \int_0^z\frac{(z-\zeta)^{n-1}}{(n-1)!}T(\zeta)xd\zeta$ is locally
 bounded on $\Sigma_\phi^+.$ Let us denote $(\vert z\vert,z)$ the circular path $\vert z\vert e^{i\theta},\ 0\le \theta \le \arg(z).$ 
 Writing
  $$\int_0^z(z-\zeta)^{n-1}T(\zeta)xd\zeta=\int_0^{\vert
 z\vert}(z-\zeta)^{n-1}T(\zeta)xd\zeta+\int_{(\vert z\vert,z)}(z-\zeta)^{n-1}T(\zeta)xd\zeta,
$$
 and  since $(T(t))_{t\geq 0}$ is
 strongly continuous, we need only to show the local boundedness
 of the second integral. Let $K\subset \Sigma_\phi^+$ be a compact and $c_K>0$ such that
\[(\vert z \vert \vert\sin(\phi -\vert{\rm  arg
}(z)\vert)\vert)^\gamma  \Vert T(z)\Vert_{{\mathcal L}(E)}\leq c_K
\] for all $z\in K.$ A
  direct calculation gives

  \begin{align*}
\Big\Vert\int_{(\vert
    z\vert,z)}\frac{(z-\zeta)^{n-1}}{(n-1)!}T(\zeta)xd\zeta\Big\Vert \leq& \frac{1}{(n-1)!}\int_0^\phi \vert\sin(\phi
-\vert{\rm  arg }(\zeta)\vert)\vert^\gamma )^{n-1-\gamma} \Vert
T(\zeta)x\Vert d\zeta  \\
     \leq & \frac{c_K}{(n-1)!}\vert z\vert^{n-1-\gamma}\int_0^\phi
    \sin^{n-1-\gamma}(\theta)d\theta.
\end{align*}
When $z\in \Sigma_\phi^-$, one may consider mutatis mutandis the circular path $\vert z\vert e^{i\theta},\  \,  \arg(z)\le \theta\le 0$ and do the same calculation.     
\end{proof}

\begin{remark}
     Assuming in Proposition \ref{pro_scda}
  only   $$\sup_{z\in \Sigma_\phi \cap D_\pm}
(\vert z \vert d(z,\partial \Sigma))^\gamma  \Vert T(z)\Vert<\infty $$ then we obtain
analogously  for $x\in D(A^{np}) $  that
$\lim_{\epsilon\rightarrow 0}T(\epsilon +z)x$ converges uniformly
for $z\in \Sigma_\phi^\pm, \ \vert z\vert\leq p$ for all integer
$p\geq 1.$ In other words,  to guarantee the existence of the
boundary value $T(te^{i\pm\phi})x$ for  more time ($0\leq t\leq p$)
we need more regularity on the initial data (compare with \cite{AEK94}).

\end{remark}

\section{Hadamard-type fractional integrals}
\setcounter{theorem}{0}
 \setcounter{equation}{0}

In the articles   \cite{BKT02, BKT02b},  \cite{Kil01}, the Hadamard fractional integral was considered along with a generalization and the semigroup property was established. The Hadamard fractional integral first appeared in the paper \cite{Had1892} on the study of functions presented by power series.  This fractional integral has been studied extensively in the monograph \cite{SKM1993} along with the associated concept of fractional derivative. Here, we prove the semigroup property and obtain that the semigroups involved  are holomorphic with angle $\pi/2$ on  a class of weighted $L^p$ spaces. Moreover, we show that these semigroups admit a boundary value group when $1<p<\infty.$ We shall consider two of the operator families studied in
 \cite{Kil01,BKT02}. The other families can be treated with the methods of the present section. Our approach uses abstract semigroup theory in contrast with \cite{BKT02, BKT02b,  Kil01} where the authors proceed with direct computation.
 Moreover, the above cited papers  do not consider complex parameters in the semigroups. We shall obtain as a consequence  the representation of the semigroup powers of the Ces\`aro averaging operator.

The (generalized) Hadamard fractional integral of a function $f$  is defined as follows (see  \cite{Kil01, BKT02, BKT02b}) 
\begin{equation}\label{Had-mu}
(\mathcal{J}_\mu^\alpha f)(x)=\frac{1}{\Gamma(\alpha)}\int_0^x(\frac{t}{x})^\mu (\ln(\frac{x}{t}))^{\alpha-1} f(t)\frac{dt}{t}\quad  (x>0),
\end{equation}
where $\alpha>0$ and $ \mu\in\mathbb{R}.$ The original definition  corresponds to $\mu=0$ and is discussed in  \cite[Chapter 4, Section 18.3]{SKM1993}.
As in  \cite{Kil01, BKT02, BKT02b}, we consider the Banach spaces
 \[X^p_c=\{ f: (0,\, 1)\longrightarrow\mathbb{C} , f \text{\,\, measurable\,\, and \,} \Vert f\Vert_{c,p}=\left(\int_0^1 \vert  x^c f(x)\vert^p\frac{dx}{x}\right)^{1/p}<\infty\}.\]
The space  $X^p_c$ is a Banach space and if $c=\frac{1}{p},$ it coincides with $L^p(0,\, 1).$ We note that if $\alpha= \mu=1$ then
$(\mathcal{J}_1^1 f)(x)=\frac{1}{x}\int_0^x  f(t) {dt} ,\, x>0,$ which is the Ces\`aro operator, so that   $L^p$ boundedness of $\mathcal{J}_1^1$ is to be compared to Hardy's inequality. It follows that the semigroup $(\mathcal{J}_1^\mu)$ represents the fractional powers of the Ces\`aro operator.

We shall prove the following result on the boundary values of the Hadamard -type fractional integral.

\begin{theorem}\label{Hada1} Let $1\le p<\infty$ and $\mu,\, c\in\mathbb{R}$ with $\mu> c.$ Then
 the family of operators $(\mathcal{J}_\mu^\alpha )_{\alpha>0}$ acting on the space $X^p_c$ forms a strongly continuous semigroup which has an analytic extension to the right half-plane
 $ \C_+=\{\alpha\in\C\mid  \Re (\alpha)>0\}.$ Moreover,  the semigroup  $(\mathcal{J}_\mu^\alpha )_{\alpha>0}$    has a boundary $C_0-$group. More precisely,   $(\mathcal{J}^{is}_\mu)_{s\in\R},$  given by
 \begin{equation}
 (\mathcal{J}_\mu^{is} f)=\lim_{\sigma\to 0^+}(\mathcal{J}_\mu^{\sigma+is} f),\ \forall f\in X^p_c
 \end{equation}
  and $(\mathcal{J}_\mu^{is})_{\mu \in \R}$ forms a $C_0-$group, provided $1<p<\infty$.
\end{theorem}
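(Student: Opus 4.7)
The plan is to recognize $(\mathcal{J}_\mu^\alpha)$ as the family of negative fractional powers of the generator of a very simple multiplicative translation semigroup, and then appeal to the general theory of fractional powers plus the Coifman--Weiss transference principle. First I would perform the substitution $s=\ln(x/t)$ in \eqref{Had-mu}, so that $dt/t=-ds$ and $(t/x)^\mu=e^{-\mu s}$, which rewrites the Hadamard integral as
\begin{equation*}
(\mathcal{J}_\mu^\alpha f)(x)=\frac{1}{\Gamma(\alpha)}\int_0^\infty s^{\alpha-1}e^{-\mu s}f(xe^{-s})\,ds.
\end{equation*}
Introducing $U(s)f(x):=e^{-\mu s}f(xe^{-s})$, this becomes the classical Balakrishnan formula
$(\mathcal{J}_\mu^\alpha f)=\frac{1}{\Gamma(\alpha)}\int_0^\infty s^{\alpha-1}U(s)f\,ds$.

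The next step is to verify that $(U(s))_{s\ge 0}$ is a $C_0$-semigroup on $X^p_c$ and that its norm decays exponentially. A direct change of variable $y=xe^{-s}$ gives $\|U(s)f\|_{c,p}\le e^{(c-\mu)s}\|f\|_{c,p}$, so the hypothesis $\mu>c$ ensures $(U(s))$ is a uniformly exponentially stable contraction semigroup. Strong continuity on the dense subspace of compactly supported smooth functions is immediate from dominated convergence, and passes to $X^p_c$ by the contraction bound. Let $-B$ denote its generator; exponential stability gives $0\in\rho(B)$, so the negative fractional powers $B^{-\alpha}$ are well defined for $\mathrm{Re}(\alpha)>0$, and by the standard Balakrishnan representation they coincide with $\mathcal{J}_\mu^\alpha$ when $\alpha>0$. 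Analytic continuation in $\alpha$, together with the semigroup property $B^{-\alpha}B^{-\beta}=B^{-(\alpha+\beta)}$, then yields the strongly continuous semigroup extending analytically to $\mathbb{C}_+$. Because $-B$ generates a bounded $C_0$-semigroup and $0\in\rho(B)$, the family $(B^{-\alpha})_{\mathrm{Re}(\alpha)>0}$ is a holomorphic semigroup of angle exactly $\pi/2$; this is the cited result in \cite{Haase2006} (Theorem 3.2.1 and its corollaries on the angle of holomorphy of negative fractional powers).

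For the boundary group on the imaginary axis I would invoke the Coifman--Weiss transference machinery exactly as the paper recalls was done for the Riemann--Liouville semigroup in \cite{EK1996}. Via the change of variable $y=-\ln x$, the space $X^p_c$ is isometrically isomorphic to a weighted $L^p$ space on $(0,\infty)$, and $U(s)$ is transformed into the left translation on that $L^p$ space multiplied by a scalar weight. Hence $U(s)$ extends to a bounded $C_0$-\emph{group} after multiplication by the stabilising weight, and one obtains uniform estimates on the imaginary powers $B^{is}$ by transferring from the classical boundedness of $|D|^{is}=\mathcal{F}^{-1}|\xi|^{is}\mathcal{F}$ on $L^p(\mathbb{R})$ for $1<p<\infty$. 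This yields $\sup_{|s|\le 1}\|\mathcal{J}_\mu^{\sigma+is}\|<\infty$ uniformly for $\sigma\in(0,1]$, so by Proposition~1.1 the holomorphic semigroup $(\mathcal{J}_\mu^z)$ admits boundary values $(\mathcal{J}_\mu^{is})_{s\in\mathbb{R}}$ forming a $C_0$-group.

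The main obstacle is not the identification of $\mathcal{J}_\mu^\alpha$ with $B^{-\alpha}$, nor the angle of holomorphy, both of which are automatic once the generator is identified; it is the boundedness of the imaginary powers, because this is where the hypothesis $1<p<\infty$ enters crucially and where the elementary estimates from Sections 2 and 4 fail. Some care must be taken to ensure that the transference step applies: one has to verify that $(U(s))$ on $X^p_c$ fits into the framework of bounded representations of $\mathbb{R}$ on an $L^p$-space with $\sigma$-finite measure, which is the reason for introducing the logarithmic change of variable explicitly and checking that the resulting space is $L^p$ of a measure absolutely continuous with respect to Lebesgue measure. Once this is done, the case $p=1$ is excluded precisely because the imaginary powers of $-d/dy$ on $L^1(\mathbb{R})$ are unbounded, in parallel with the Riemann--Liouville case discussed earlier in this section.
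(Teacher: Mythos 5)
Your proposal is correct and follows essentially the same route as the paper: you introduce the same auxiliary semigroup $T(s)f(x)=e^{-\mu s}f(xe^{-s})$ on $X^p_c$, establish exponential stability from $\mu>c$ by the same change of variable, identify $\mathcal{J}_\mu^\alpha$ with the negative fractional powers $(-A)^{-\alpha}$ of its generator via the Balakrishnan formula, and obtain the boundary group for $1<p<\infty$ from the Coifman--Weiss transference principle. The only difference is that you spell out the logarithmic conjugation to a translation group more explicitly than the paper, which simply cites the transference theorem; this is a matter of detail, not of method.
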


\begin{proof}
 We consider the operator family
 \begin{equation}\label{Had-mu}
(T(t) f)(x)=  e^{-\mu t}f(e^{-t}x) ,\, x\in (0,\, 1),\, t>0.
\end{equation}
Then it is readily verified that $T=(T(t))_{t\geq 0}$ is a strongly continuous group on the space $ X_c^p$ defined above. The infinitesimal generator of $T$ is the operator
$\displaystyle A=x\frac{d}{dx}-\mu I.$ If $\mu> c$ then $T$ is exponentially stable. In fact,
\begin{eqnarray*}
\Vert T(t)f\Vert_{X_p^c}^p&=&\int_0^1  e^{-\mu p t}\vert x^c f(e^{-t} x)\vert^p \frac{dx}{x}\cr
                    &=&e^{-p(\mu -c) t}\int_0^{e^{-t}}  \vert u^c f(u)\vert^p \frac{du}{u}
                    \cr&\le &e^{-p(\mu -c) t}\Vert f\Vert_{X_p^c}^p.
\end{eqnarray*}
The fractional powers $A^{-\alpha}$ for $\alpha>0$ are given by
the well-known formula (see e.g. \cite[p. 167, Formula (3.56)]{ABHN2011},\, \cite[Proposition 11.1]{Kom1966}) or
\cite[Proposition 3.3.5 and Corollary 3.3.6]{Haase2006}).

\begin{equation}\label{powers-2}
((-A)^{-\alpha}f)(x)=\frac{1}{\Gamma(\alpha)}\int_0^\infty t^{\alpha-1}(T(t)f)(x)dt,\, f\in X.
\end{equation}
By a change of variable in the integral, we have for $f\in X:$
\begin{eqnarray*}
((-A)^{-\alpha}f)(x)&=&\frac{1}{\Gamma(\alpha)}\int_0^\infty t^{\alpha-1}(T(t)f)(x)dt\\
&=&\frac{1}{\Gamma(z)}\int_0^\infty t^{z-1}e^{-\mu t}f(e^{-t}x)dt\\
&=&\frac{1}{\Gamma(\alpha)}\int_0^x(\frac{t}{x})^\mu (\ln(\frac{x}{t}))^{\alpha-1} f(t)\frac{dt}{t}\, \\
\end{eqnarray*}
which is \eqref{Had-mu}. From this representation, the semigroup property for the family  $(\mathcal{J}_\mu^\alpha )_{\alpha>0}$ follows by the general theory of fractional powers of operators (see e.g. \cite[Proposition 3.2.3]{Haase2006}).
 Analyticity also follows from the general theory.

  In order to obtain the last assertion, we note that $(T(t))$ is a strongly continuous semigroup of positive contraction operators on the space $X=L^p(0,\, 1),\, 1<p<\infty.$
 The conclusion is obtained by application of the Coifman-Weiss transference principle (see \cite[Theorem 3.9.5]{ABHN2011}, \cite{C-W76}).
\end{proof}

Let us consider the particular case where $\mu=1$.
\begin{corollary}\label{Corollary Cesaro} 
Assume that $1<p<\infty.$ Then the the family
\begin{eqnarray*}
T(z)f(x)=((-A)^{-z}f)(x)=\frac{1}{x\Gamma(z)}\int_0^x  (\ln(\frac{x}{t}))^{z-1} f(t) {dt},\, x>0,\, Re(z)>0
\end{eqnarray*}
forms a holomorphic semigroup of angle $\frac{\pi}{2}$ in the space $L^p(0,\, 1).$ This semigroup admits a boundary value group on the imaginary axis.
\end{corollary}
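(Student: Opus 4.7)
The plan is to obtain Corollary 3.2 as the specialization of Theorem 3.1 to the parameter values $\mu = 1$ and $c = 1/p$. With this choice the weighted space reduces to $L^p(0,1)$, since
$$\|f\|_{1/p,\,p}^p = \int_0^1 |x^{1/p} f(x)|^p \,\frac{dx}{x} = \int_0^1 |f(x)|^p\, dx,$$
and the hypothesis $\mu > c$ required by Theorem 3.1 becomes $1 > 1/p$, i.e.\ $p > 1$, which is precisely the hypothesis of the corollary.

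Next, I would substitute $\mu = 1$ in the explicit formula for $\mathcal{J}_\mu^\alpha$ given in Theorem 3.1; the factor $(t/x)^\mu$ combines with the measure $dt/t$ to produce $dt/x$, so that
$$(\mathcal{J}_1^\alpha f)(x) = \frac{1}{x\,\Gamma(\alpha)}\int_0^x \bigl(\ln(x/t)\bigr)^{\alpha-1} f(t)\, dt,$$
which coincides with the operator $T(z)f(x)$ displayed in the corollary after analytic continuation from real $\alpha > 0$ to $z \in \C_+$. Equivalently, one may note that the generator in the proof of Theorem 3.1 is $A = x \tfrac{d}{dx} - I$, so that $T(z) = (-A)^{-z}$, consistent with the identification written in the corollary.

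The conclusion then follows directly from Theorem 3.1. The analytic extension to the right half-plane, combined with strong continuity on $(0,\infty)$ and the semigroup law provided by the general theory of fractional powers, yields a holomorphic semigroup of angle $\pi/2$ on $L^p(0,1)$. Because $1 < p < \infty$, the Coifman--Weiss transference step in the proof of Theorem 3.1 then furnishes the boundary $C_0$-group on the imaginary axis. There is no serious obstacle here beyond recognizing the correct parameter match and checking that the formulas coincide; the substantive analytic and transference work has already been carried out in the proof of Theorem 3.1.
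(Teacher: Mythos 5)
Your proposal is correct and is exactly the paper's intended argument: the corollary is stated as the special case $\mu=1$, $c=1/p$ of Theorem \ref{Hada1}, where $X^p_{1/p}=L^p(0,1)$ and the hypothesis $\mu>c$ reduces to $p>1$. Your verification of the norm identity, the parameter condition, and the simplification $(t/x)\,\frac{dt}{t}=\frac{dt}{x}$ in the kernel matches what the paper leaves implicit.
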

 Another consequence of the above representation is the explicit description of the powers of the averaging operator
of the \textit{Ces\`aro operator} $C,$ 
 \[(Cf)(x):=\frac{1}{x}\int_0^x  f(s){ds},\, f\in L^p(0,\, 1).\] 
Clearly $C=T(1).$ The strong continuity of $C$ in $L^p(0,\, 1), \, 1<p<\infty$ yields the Hardy's inequality.   Since  Hardy's inequality does not hold for $p=1$, we see that the condition $\mu>c$ in  \ref{Hada1} is sharp.
\begin{corollary} \label{Cesaro}
For each  $n\in \mathbb{N}$ and $f\in L^p(0,\, 1)$ we have
\begin{equation}\label{Cesaro-n}
(C^nf)(x)=\frac{1}{ x(n-1)!}\int_0^x  (\ln(\frac{x}{t}))^{n-1} f(t) {dt}\quad  .
\end{equation}
\end{corollary}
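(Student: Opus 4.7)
The plan is to derive Corollary \ref{Cesaro} as an immediate specialization of the preceding Corollary \ref{Corollary Cesaro}. The latter gives an explicit formula for the holomorphic semigroup $T(z) = (-A)^{-z}$ on $L^p(0,1)$ with $1 < p < \infty$, namely
\begin{equation*}
T(z)f(x) = \frac{1}{x\,\Gamma(z)}\int_0^x \bigl(\ln(x/t)\bigr)^{z-1} f(t)\,dt,\qquad \Re(z) > 0,
\end{equation*}
and identifies the Ces\`aro operator as the time-one value $C = T(1)$ (observed directly from the formula with $z=1$, where $(\ln(x/t))^0 = 1$ and $\Gamma(1) = 1$).

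First I would invoke the semigroup property from Theorem \ref{Hada1} (specialized to $\mu = 1$), which gives $T(z + z') = T(z)T(z')$ for all $z, z'$ with positive real part. Iterating this $n-1$ times at $z = z' = 1$ yields $T(n) = T(1)^n = C^n$ for every positive integer $n$. Substituting $z = n$ into the explicit formula for $T(z)$ above and using $\Gamma(n) = (n-1)!$ produces exactly the claimed identity \eqref{Cesaro-n}.

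There is essentially no obstacle here: all the analytic work (the identification of $T(z)$ with the Hadamard integral, the verification of the semigroup property, and the holomorphic extension) has already been carried out in Theorem \ref{Hada1} and Corollary \ref{Corollary Cesaro}. The only care needed is to note that the semigroup property, which a priori is asserted for complex parameters, in particular holds at integer values, and that integer iteration of the bounded operator $C$ on $L^p(0,1)$ coincides with the usual operator power $C^n$. This matching is automatic since $T(n) = T(1+1+\cdots+1)$ equals the $n$-fold composition $T(1)\cdots T(1) = C^n$ as bounded operators on $L^p(0,1)$.
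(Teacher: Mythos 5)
Your proposal is correct and follows exactly the paper's own argument: the paper dispatches this corollary in one line as "a direct consequence of the semigroup property: $C^n=(T(1))^n=T(n)$," which is precisely your iteration of $T(z+z')=T(z)T(z')$ followed by substitution of $z=n$ into the explicit kernel formula with $\Gamma(n)=(n-1)!$. Your additional remarks on why operator powers match semigroup values are fine but not needed beyond what the paper already established.
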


This is of course a direct consequence of the semigroup property: $C^n=(T(1))^n=T(n).$

We observe that this formula was obtained by D. W. Boyd \cite[Lemma 2]{Boyd1968} who used mathematical induction. He used this result to study the spectral radius of averaging operators. The spectral theory of the Ces\`aro operator (including the discrete version) has been studied in several papers, (see for  example \cite[Section 2]{AdP2002} where the Boyd indices are used in the description of the spectrum in various Banach function spaces).

Boyd obtains the following formula (we consider the case $a=0$ in his formula)
\begin{equation}
(C^nf)(x)=\frac{1}{ (n-1)!}\int_0^1  (\ln(\frac{1}{s}))^{n-1} f(sx) {ds},\, f\in L^p(0,\, 1)
\end{equation}
which is readily obtained from \eqref{Cesaro-n} by a change of variable.  In fact, the semigroup  $(T(t))$ in Corollary \ref{Corollary Cesaro}  can be written as follows
\begin{eqnarray*}
(T(z) f)(x)=\frac{1}{\Gamma(z)}\int_0^1(\sigma)^{\mu-1} (\ln(\frac{1}{\sigma}))^{z-1} f(\sigma x){d\sigma},\,  f\in L^p(0,\, 1) ,\, \Re(z)>0.
\end{eqnarray*}
We observe that the above theorem and its corollaries remain valid if we replace $L^p(0,\, 1)$ with $L^p(0,\, a)$ where $a\in (0,\, \infty]$. We state this below for the case $a=\infty.$ For that we introduce the space
\[X_c^{p} :=\{ f: (0,\, \infty)\longrightarrow\mathbb{C} , f  \text{\,\, measurable\,\, and \,} \Vert f\Vert_{c,p}=\left(\int_0^\infty \vert  x^c f(x)\vert^p\frac{dx}{x}\right)^{1/p}<\infty\}.\]

\begin{theorem}\label{Hada1-ab} Let $1\le p<\infty,\, \mu,\, c\in\mathbb{R}$, with $\mu> c\in\mathbb{R}$ Then
 the family of operators $(\mathcal{J}_\mu^\alpha )_{\alpha>0}$ acting on the space $X_c^p$ forms a strongly continuous semigroup which has an analytic extension to the right half-plane $\C_+.$ Moreover,  the semigroup  $(\mathcal{J}_\mu^\alpha )_{\alpha>0}$    has a boundary $\mathcal C_0-$group on $X_c^p$  denoted   $(\mathcal{J}_\mu^{is})_{s\in\R}$  where
 \begin{equation}
 (\mathcal{J}_\mu^{is} f)(x)=\lim_{\sigma\to 0^+}(\mathcal{J}_\mu^{\sigma+is} f)(x)
 \end{equation}
  provided $1<p<\infty$.
\end{theorem}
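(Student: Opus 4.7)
The plan is to adapt the argument used for Theorem~\ref{Hada1} to the unbounded interval $(0,\infty)$. First, I introduce the dilation semigroup
\[
(T(t)f)(x) := e^{-\mu t}\,f(e^{-t}x),\qquad x\in(0,\infty),\ t\geq 0,
\]
on $X_c^p$. The change of variable $u=e^{-t}x$ (now a bijection of $(0,\infty)$ onto itself) yields, this time as an exact identity rather than the inequality obtained on $(0,1)$,
\[
\|T(t)f\|_{X_c^p}=e^{-(\mu-c)t}\|f\|_{X_c^p},
\]
so $T$ is strongly continuous and exponentially stable provided $\mu>c$. Its generator is $A=x\,\frac{d}{dx}-\mu I$.

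Next, I would apply the Balakrishnan representation
\[
\bigl((-A)^{-\alpha}f\bigr)(x) = \frac{1}{\Gamma(\alpha)}\int_0^\infty t^{\alpha-1}(T(t)f)(x)\,dt,\qquad \Re\alpha>0,
\]
and substitute $s=e^{-t}x$ exactly as in the proof of Theorem~\ref{Hada1}. This recovers the Hadamard-type formula~\eqref{Had-mu-i0} for $\mathcal{J}_\mu^\alpha$, now valid on all of $(0,\infty)$. The semigroup property and the holomorphy of $\alpha\mapsto\mathcal{J}_\mu^\alpha$ on $\mathbb{C}_+$ then follow from the standard theory of fractional powers of sectorial operators (cf.\ \cite[Proposition 3.2.3]{Haase2006}).

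For the boundary group assertion when $1<p<\infty$, I would transfer the problem to $L^p(\mathbb{R})$ via the isometric isomorphism $\Phi:X_c^p(0,\infty)\to L^p(\mathbb{R})$ given by $(\Phi f)(y)=e^{-cy}f(e^{-y})$. A direct computation shows $(\Phi T(t)\Phi^{-1}g)(y)=e^{-(\mu-c)t}g(y+t)$, exhibiting $T$ as an exponential rescaling of the translation group $S(t)g(y)=g(y+t)$ on $L^p(\mathbb{R})$. Because $(S(t))_{t\in\mathbb{R}}$ is a $C_0$-group of $L^p$-isometries for $1<p<\infty$, the Coifman--Weiss transference principle (\cite[Theorem 3.9.5]{ABHN2011}, \cite{C-W76}) provides bounded imaginary powers for the conjugated generator $-\Phi A\Phi^{-1}=-\tfrac{d}{dy}+(\mu-c)I$ and hence for $-A$ itself. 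Strong continuity at $s=0$ of the resulting group then follows from standard density arguments for the functional calculus.

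The main obstacle is this last step: turning a bounded $H^\infty$-calculus on the translation generator into an estimate of $\|\mathcal{J}_\mu^{is}\|$ uniform on compact subsets of $\mathbb{R}$, with strong continuity at $0$. The restriction $1<p<\infty$ is essential at precisely this point, reflecting the $L^p$-boundedness of the Mikhlin symbol $((\mu-c)-i\xi)^{is}$; as in Theorem~\ref{Hada1}, the argument cannot be pushed to the endpoints $p=1$ or $p=\infty$.
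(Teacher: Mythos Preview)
Your proposal is correct and follows essentially the same route as the paper, which gives no separate proof for Theorem~\ref{Hada1-ab} but simply remarks that the argument for Theorem~\ref{Hada1} carries over verbatim to the interval $(0,\infty)$. Your explicit conjugation $\Phi:X_c^p(0,\infty)\to L^p(\mathbb{R})$ intertwining $T(t)$ with the rescaled translation $e^{-(\mu-c)t}S(t)$ is a useful clarification the paper omits---it makes transparent why the Coifman--Weiss transference principle applies on the weighted space and not only on $L^p$---and your worry in the final paragraph is unwarranted: once $(e^{-(\mu-c)t}S(t))_{t\ge 0}$ is identified as a positive contraction semigroup on $L^p(\mathbb{R})$, the cited transference result \cite[Theorem~3.9.5]{ABHN2011} yields bounded imaginary powers (equivalently, a boundary $C_0$-group) directly, with no further obstacle.
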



For the remainder of this section, we consider a second form of the Hadamard fractional integral operator to which the above construction applies (see again \cite{Kil01, BKT02, BKT02b}  and  \cite[Chapter 4, Section 18.3]{SKM1993} for the case $\mu=0$).
Here we set
\begin{equation}\label{Had-mu2}
(\mathcal{I}_\mu^\alpha f)(x)=\frac{1}{\Gamma(\alpha)}\int_x^\infty(\frac{x}{u})^\mu (\ln(\frac{u}{x}))^{\alpha-1} f(u)\frac{du}{u},\, x>0.
\end{equation}
We obtain the following counterpart of Theorem \ref{Hada1}.

\begin{theorem}\label{Hada2} Let $1\le p<\infty$ and let $\mu\in\mathbb{R}$ such that  $c+\mu >0$.   Then
 the family of operators $(\mathcal{I}_\mu^\alpha )_{\alpha>0}$ acting on $X_c^p$ forms a strongly continuous semigroup which has an analytic extension to the right half-plane $\C_+$.  Moreover, the semigroup has a boundary $C_0$-group on on $X_c^p$ .
\end{theorem}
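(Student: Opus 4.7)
The plan is to mirror the proof of Theorem \ref{Hada1} by constructing an auxiliary $C_0$-semigroup on $X_c^p$ whose fractional powers recover the operators $\mathcal{I}_\mu^\alpha$. Concretely, I would introduce
\begin{equation*}
(T(t)f)(x) := e^{-\mu t} f(e^t x), \qquad x \in (0,\infty),\ t \geq 0,
\end{equation*}
and verify via the substitution $u = e^t x$ (which preserves $du/u$) that
\begin{equation*}
\|T(t)f\|_{c,p}^p = e^{-(\mu+c)pt}\,\|f\|_{c,p}^p .
\end{equation*}
Thus the hypothesis $\mu+c>0$ forces $(T(t))_{t\geq 0}$ to be an exponentially stable $C_0$-semigroup of positive contractions on $X_c^p$. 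A direct computation at $t=0$ shows that its generator is $A = x\,d/dx - \mu I$, and in particular $0 \in \rho(A)$, so the fractional powers $(-A)^{-\alpha}$ are well-defined for all $\alpha>0$.

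Next I would apply the Balakrishnan representation for fractional powers (see \cite[Section 3.8]{ABHN2011} or \cite[Proposition 3.3.5]{Haase2006}): for $\alpha > 0$ and $f \in X_c^p$,
\begin{equation*}
((-A)^{-\alpha} f)(x) \;=\; \frac{1}{\Gamma(\alpha)} \int_0^\infty t^{\alpha-1} (T(t)f)(x)\, dt \;=\; \frac{1}{\Gamma(\alpha)} \int_0^\infty t^{\alpha-1} e^{-\mu t} f(e^t x)\, dt.
\end{equation*}
The change of variable $u = e^t x$, so that $t = \ln(u/x)$ and $dt = du/u$, transforms the right-hand side into precisely the kernel in \eqref{Had-mu2}, yielding $\mathcal{I}_\mu^\alpha = (-A)^{-\alpha}$. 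The semigroup property in $\alpha$, together with holomorphic extension to $\mathbb{C}_+$, both then follow from the abstract theory of fractional powers of sectorial operators (cf.\ \cite[Proposition 3.2.3]{Haase2006}), exactly as in Theorem \ref{Hada1}.

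For the boundary $C_0$-group when $1<p<\infty$, I would use the isometric change of variable $\Phi : X_c^p \to L^p(\mathbb{R})$, $(\Phi f)(s) := e^{cs} f(e^s)$, which is a bijective isometry. A routine calculation shows that $\Phi$ conjugates $T(t)$ with $e^{-(\mu+c)t}\sigma_t$, where $\sigma_t$ is the left-translation semigroup on $L^p(\mathbb{R})$; in particular, the conjugated semigroup is a $C_0$-semigroup of positive contractions on the standard $L^p$-space. The Coifman--Weiss transference principle (\cite[Theorem 3.9.5]{ABHN2011}) then delivers uniform bounds for the purely imaginary powers $(-A)^{is}$, $s \in \mathbb{R}$, which via Proposition \ref{Proposition 2.1} produce the required boundary $C_0$-group on the imaginary axis.

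The main technical obstacle I expect is keeping track of domains and justifying the Fubini-type manipulations uniformly in $\alpha$: one needs to verify the Balakrishnan identity and the change-of-variable computation on a dense subspace (e.g.\ smooth functions compactly supported in $(0,\infty)$), and then extend the resulting operator-norm estimates — especially the uniform bound on imaginary powers from transference — to all of $X_c^p$ by density. Once these density and measurability checks are in hand, every remaining assertion is a formal consequence of the abstract fractional-powers machinery and of Proposition \ref{Proposition 2.1}.
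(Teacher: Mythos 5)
Your proposal is correct and follows essentially the same route as the paper: the auxiliary semigroup $(T(t)f)(x)=e^{-\mu t}f(e^t x)$ on $X_c^p$, the norm identity giving exponential stability from $c+\mu>0$, the integral representation of $(-A)^{-\alpha}$ with the substitution $u=e^t x$ yielding $\mathcal{I}_\mu^\alpha$, the abstract fractional-power theory for the semigroup property and holomorphy, and the Coifman--Weiss transference principle for the boundary group when $1<p<\infty$. Your explicit isometry $\Phi:X_c^p\to L^p(\mathbb{R})$ conjugating $T(t)$ to a translation semigroup is a detail the paper leaves implicit, but it is the intended justification for applying transference.
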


\begin{proof}
In order to obtain the result, we consider the semigroup:
 \begin{equation}\label{Had-mu-10}
(T(t) f)(x)=  e^{-\mu t}f(e^{t}x) ,\, x>0,\, t>0.
\end{equation}
acting on the space $X_c^p.$ Let denote by $A$ its infinitesimal generator. Then 
\begin{eqnarray*}
\Vert T(t)f\Vert_{X_c^p}^p&=&\int_0^\infty \vert x^ce^{-\mu t}f(xe^t)\vert^p\frac{dx}{x}\cr
                          &=&e^{-\mu p t}\int_0^\infty e^{-cpt}\vert u^c f(u)\vert^p\frac{du}{u}\cr
                          &=&  e^{-p(c+\mu )t}\Vert  f \Vert_{X_c^p}
\end{eqnarray*}
for every $f\in X_c^p$ and  $t>0.$ Since $c+\mu >0,$ we deduce that $T$ is exponentially stable. Next, for $\alpha>0$ we have
\begin{equation}\label{powers-2}
((-A)^{-\alpha}f)(x)=\frac{1}{\Gamma(\alpha)}\int_0^\infty t^{\alpha-1}(T(t)f)(x)dt=\frac{1}{\Gamma(\alpha)}\int_0^\infty t^{\alpha-1}e^{-\mu t}f(xe^t)dt,\, f\in X.
\end{equation}
Again, by  a change of variable in the integral, we have for $f\in X:$
\begin{align*}
((-A)^{-\alpha}f)(x)&=\frac{1}{\Gamma(\alpha)}\int_x^\infty(\frac{x}{u})^\mu (\ln(\frac{u}{x}))^{\alpha-1} f(u)\frac{du}{u},\, x>0,
\\ & =(\mathcal{I}_\mu^\alpha f)(x).
\end{align*}
Now the proof is completed in the same way as  in the proof of Theorem \ref{Hada1}. 
\end{proof}

We remark that we can consider in the above results $ \mu\in\mathbb{C}$, in which case the conditions in Theorem \ref{Hada1} and Theorem \ref{Hada2} become $\rm Re(\mu)>c$ and
$\rm {Re(\mu)}>-c$ respectively.\\

We observe that the theorem holds for $X=L^p(0,\,\infty)$ provided $\mu >-\frac{1}{p}$ because $X_c=L^p(0,\, \infty)$ if $c=\frac{1}{p}.$
  We observe that a consequence of the theorem is the  second Hardy inequality which gives the $L^p-$continuity of $\mathcal{I}_0^1$ where  $(\mathcal{I}_0^1 f)(x)=\int_x^\infty f(u)\frac{du}{u},\, f\in L^p(0, \,\infty)$ for $1\le p<\infty.$

We now check that the above semigroups may be studied in the Lipschitz and H\"older spaces as the Riemann-Liouville semigroup  considered in the next section.
We will do this for the semigroup \eqref{Had-mu} which we further simplify by taking $\mu=0.$ For $h>0, t>0$ and $f\in  C[0,\, 1],$ we have by the Mean Value theorem:

\begin{eqnarray*}
\vert t^{-\alpha} (T(t) f-f)(x)\vert&=& t^{-\alpha}\vert f(e^{-t}x)-f(x)\vert\cr
                &=& t^{-\alpha}\vert  f( x-te^{-\eta t}x)-f(x)\vert\cr
                &=& (xe^{-\eta t})^{\alpha}(te^{-\eta t}x)^{-\alpha}\vert  f( x-te^{-\eta t}x)-f(x)\vert\cr
                 &\le & \sup_{h>0}\{h^{-\alpha}\vert  f( x-h )-f(x)\vert\}\cr
\end{eqnarray*}
It follows that ${\rm Lip}^\alpha[0,\, 1]\subset  F_\alpha$ for $0<\alpha<1$ as well as  ${\rm lip}^\alpha[0,\, 1]\subset X_\alpha.$ Here, $F_\alpha$ and $X_\alpha$ are denote respectively  the {\it{Favard}}  space and {\it{the abstract H\"{o}lder}} space of order $\alpha$ (see\cite[Chapter II]{EN2000}) 
On the other hand, if $0<\alpha\le 1$ and $s>0$, from
\begin{eqnarray*}
t^{-\alpha}(T(t)T(s)f-T(s)f)=t^{-\alpha}T(s)(T(t)f-f)
\end{eqnarray*}
that ${\rm Lip}^\alpha[0,\, 1]$\, \, is   $T(t)-$invariant.

More information  about mapping properties of operators with power logarithmic kernels such as the Hadamard fractional integrals can be found in \cite[Paragraph 21]{SKM1993} where $L^p$ spaces and spaces of H\"older continuous functions are considered. Our aim in this paper is to study the purely imaginary powers from the viewpoint of holomorphic semigroups.

\section{The Riemann-Liouville semigroup}
\setcounter{theorem}{0}
 \setcounter{equation}{0}
We  discuss the
Riemann-Liouville semigroup in connection with the right translation
semigroup $S$ acting on ${\rm  Lip}_\alpha [0,1].$  Later, we shall study
the Riemann-Liouville semigroup in $C_0[0,1]$ and
$\rm{Lip}_0^\alpha[0,1].$  \\

The spaces, which  can be defined in connection with any strongly continuous semigroup,  are called Favard classes (for
$\rm{Lip}_0^\alpha$,
 which are denoted by $F_\alpha$ in \cite{EN2000})   and H\"older spaces  (for $\rm{lip}_0^\alpha$, denoted by $X_\alpha$ in \cite{EN2000}) and are studied systematically in \cite{BB1967} in connection with approximation theory.  In particular, one can find equivalent descriptions of these spaces in terms
of the resolvent of the infinitesimal generator of the semigroup under consideration. They are also related to the continuous interpolation spaces
and have been studied in relation to maximal regularity (for a reference, see \cite[page 155]{EN2000}).

Analogously the Riemann-Liouville semigroup $J:=(J(z))_{{\rm
Re}(z)> 0}$ on $C[0,1]$ given by (\ref{RL_representation}) is  not strongly continuous
since $J(t)1(s)=\frac{s^t}{\Gamma(t+1)}$ does not converge to $1$
as $t\rightarrow 0^+$ (one may consider for example the point evaluations at
$s=\frac{1}{2^n}$ and $t=\frac{1}{n},\  n\in \N$). But, its part in $C_0[0,1]$ defines a
$C_0-$holomorphic semigroup of angle $\pi/2.$ Denoting the
generator of the nilpotent translation semigroup $S$ (acting in
$C_0[0,1]$) by $B$ we have $\rho(B)=\mathbb{C}$ and
$$J(z)f=\frac{1}{\Gamma(z)}\int_0^{+\infty} s^{z-1}S(s)f
ds=(-B)^{-z}f,
 $$
 for all $f\in  C_0[0,1]$ where $(-B)^{-z} $ are the
 complex powers of $B$ as defined for instance in \cite{Pazy83, ABHN2011, EN2000, Kom1966}. Recall that, for all $0<\alpha<1$  the
 domain of the fractional power $(-B)^\alpha $, or equivalently the range of $(-B)^{-\alpha
 }$,  equipped with the norm $\Vert f\Vert_{(-B)^\alpha}:=\Vert
 (-B)^{-\alpha}f\Vert_\infty$ is a Banach space.

\begin{theorem}\label{JblowsinCinf} The family of operators $J=(J(z))_{Re(z)>0}$ defines a strongly continuous holomorphic semigroup
of angle $\pi/2 $ in the space $C_0[0,\, 1].$  Moreover, there exist $C_1,\ C_2 >0$ such that
\begin{equation}\label{JblowsinCinf-E1}C_1\frac{\vert z\vert}{{\rm  Re}(z)}\leq \Vert J(z)\Vert_\infty\leq C_2\frac{\vert z\vert}{{\rm
Re}(z)}\qquad ( {\rm  Re}(z)>0, \vert z\vert \leq 1).
\end{equation}
Furthermore, as operators acting on the space $\Lip_0^\alpha[0,\ 1], \, 0<\alpha<1$, we have
\begin{equation}\label{JblowsinCinf-E2}\|J(z)\|_\alpha\le C\frac{\vert z\vert}{\rm{Re}(z)}\qquad   (\Re(z)>0)
\end{equation}
 for some constant $C>0.$
\end{theorem}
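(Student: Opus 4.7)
The plan is first to dispense with the holomorphy and strong continuity claim by invoking the identification $J(z)=(-B)^{-z}$ established in the paragraph preceding the theorem (with $-B$ the generator of the nilpotent translation semigroup on $C_0[0,1]$), together with the general theory of fractional powers of sectorial operators \cite{Haase2006, ABHN2011}. This yields that $J$ is a strongly continuous semigroup on $C_0[0,1]$ which extends holomorphically to the right half-plane $\{\Re(z)>0\}$, and the angle equals $\pi/2$. The real content of the theorem is then the pair of norm estimates \eqref{JblowsinCinf-E1} and \eqref{JblowsinCinf-E2}, which I would obtain by direct estimates on the explicit Riemann--Liouville kernel $(t-s)^{z-1}/\Gamma(z)$.

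For the upper bound in \eqref{JblowsinCinf-E1}, I would estimate $|J(z)f(t)|$ by bounding $|f|$ by $\|f\|_\infty$ and computing $\int_0^t(t-s)^{\Re(z)-1}\,ds = t^{\Re(z)}/\Re(z)\le 1/\Re(z)$. The ratio $|z|/\Re(z)$ then emerges from the observation that $1/|\Gamma(z)|\le C|z|$ uniformly on $\{|z|\le 1,\ \Re(z)>0\}$, which follows from the functional equation $\Gamma(z+1)=z\Gamma(z)$ and the fact that $\Gamma$ is analytic and nonvanishing on a neighbourhood of the translated compact set $\{w : |w-1|\le 1,\ \Re(w)>1\}$.

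For the matching lower bound the key idea is to choose, for each $z=\epsilon+i\sigma$ with $\epsilon>0$, a test function that cancels the oscillation in the kernel. I propose
\[ f_z(s) := s\,(1-s)^{\epsilon-i\sigma},\qquad s\in[0,1], \]
for which $|f_z(s)|=s(1-s)^\epsilon\le 1$ and $f_z(0)=f_z(1)=0$, so $f_z\in C_0[0,1]$ with $\|f_z\|_\infty\le 1$. The product $(1-s)^{z-1}f_z(s)$ collapses to the real-valued $s(1-s)^{2\epsilon-1}$, giving a Beta integral: $(J(z)f_z)(1) = \Gamma(2\epsilon)/(\Gamma(z)\,\Gamma(2+2\epsilon))$. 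Using $|\Gamma(2\epsilon)|\sim 1/(2\epsilon)$ and $|\Gamma(z)|\le C'/|z|$ from the previous paragraph one obtains $|J(z)f_z(1)|\ge C_1 |z|/\epsilon$, which is the desired lower bound. This phase-alignment step is the least routine part of the proof; everything else reduces to direct computation.

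For the Lipschitz estimate \eqref{JblowsinCinf-E2} I would fix $0\le t'<t\le 1$, set $h=t-t'$, and after changing variables $u=t-s$ (resp.\ $u=t'-s$) split
\[ \Gamma(z)\bigl[J(z)f(t)-J(z)f(t')\bigr] = \int_0^{t'}u^{z-1}\bigl[f(t-u)-f(t'-u)\bigr]\,du + \int_{t'}^{t}u^{z-1}f(t-u)\,du. \]
In the first integral the H\"older condition supplies $|f(t-u)-f(t'-u)|\le \|f\|_\alpha h^\alpha$; in the second, since $f(0)=0$ and $0\le t-u\le h$, we obtain $|f(t-u)|\le \|f\|_\alpha h^\alpha$. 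In both pieces the integral of $u^{\Re(z)-1}$ contributes a factor of order $1/\Re(z)$, and combined with $1/|\Gamma(z)|\le C|z|$ this yields a bound of the form $C\|f\|_\alpha h^\alpha |z|/\Re(z)$. Dividing by $h^\alpha$ and combining with the $C_0$ estimate for the sup-norm part then completes the proof of \eqref{JblowsinCinf-E2}.
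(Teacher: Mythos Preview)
Your proposal is correct, and for the holomorphy claim, the upper bound in \eqref{JblowsinCinf-E1}, and the Lipschitz estimate \eqref{JblowsinCinf-E2} your argument is essentially identical to the paper's: invoke $J(z)=(-B)^{-z}$ and the general theory for the first, bound $|f|\le\|f\|_\infty$ and integrate $s^{\Re(z)-1}$ for the second, and split the increment $J(z)f(t)-J(z)f(t')$ into the two pieces you describe for the third, using $f(0)=0$ on the tail piece. (One remark: on $\Lip_0^\alpha[0,1]$ the H\"older seminorm already dominates the sup norm because $f(0)=0$, so your final sentence about ``combining with the $C_0$ estimate for the sup-norm part'' is unnecessary, and the paper does not include it.)

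The one place where you genuinely diverge from the paper is the lower bound in \eqref{JblowsinCinf-E1}. The paper takes $g_\epsilon(t)=(\epsilon+1-t)^{-i\Im(z)}-(\epsilon+1)^{-i\Im(z)}$, evaluates $J(z)g_\epsilon$ at $t=1$, lets $\epsilon\to 0$ via dominated convergence, and then splits into the two cases $|\arg(z)|<\pi/4$ and $|\arg(z)|\ge\pi/4$, handling the first with an additional approximation-of-identity argument. Your choice $f_z(s)=s(1-s)^{\overline{z}}$ is cleaner: it lies in $C_0[0,1]$ with $\|f_z\|_\infty\le 1$, the phase cancels exactly so that $(J(z)f_z)(1)$ is a Beta integral $B(2,2\epsilon)/\Gamma(z)=1/\bigl(\Gamma(z)\cdot 2\epsilon(1+2\epsilon)\bigr)$, and the bound $\|J(z)\|_\infty\ge C_1|z|/\Re(z)$ drops out immediately from $|\Gamma(z+1)|\le M$ on the half-disc. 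This avoids both the limiting procedure and the case distinction; the paper's route yields slightly more explicit constants in the regime $|\arg(z)|\ge\pi/4$ but is otherwise just longer.
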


\begin{proof}  The fact that $J$ is a holomorphic semigroup  follows from the general discussion preceding the statement of the theorem. It remains to prove the estimates \eqref{JblowsinCinf-E1} and \eqref{JblowsinCinf-E2}. Set $\D_+:=\{ z\in\C\mid  \Re(z)>0 \text{ and } \vert z\vert \leq 1\}.$

\textit{Step 1:} Let $\epsilon>0,  \ z\in \mathbb{C}^+.$ Consider the two functions $f_\epsilon\in C[0,1]$ and $g_\epsilon\in C_0[0,1]$ respectively defined by  $f\epsilon (t)=(\epsilon+1-t)^{-i\rm{Im}(z)}$  and $g_\epsilon(t):=f_\epsilon(t)-f_\epsilon(0).$ For $\epsilon$ small enough, one may find $t_0\in [0,1]$ such that $|g_{\epsilon}|=2$ and then one has $\|g_{\epsilon}\|_{infty}=2.$ 

\begin{eqnarray*}
    \|J(z)\|_{\infty} &\ge& \frac{1}{2} \lim_{\epsilon\to 0}|(J(z)g_\epsilon)(1)| \\
    &= & \frac{1}{2} \lim_{\epsilon\to 0}\Big|\frac{1}{\Gamma(z)}\int_0^{1}y^{z-1}g_\epsilon(1-y)dy\Big|  \\
    &\ge& \frac{1}{2} \lim_{\epsilon\to 0}\Big|\frac{1}{\Gamma(z)}\int_0^{1}e^{(i{\rm  Im}(z)+{\rm   Re}(z)-1)\ln(y)}e^{ -i{\rm  Im}(z)\ln
    (\epsilon+ y)}dy\Big| \\
    &&\qquad\qquad -\frac{1}{2}\lim_{\epsilon\to 0}\Big\vert \frac{1}{\Gamma(z)}\int_0^1 y^{z-1}(\epsilon+1)^{-i{\rm Im}(z)}dy\Big\vert\\
   \label{terme manque}  & \ge &   \frac{1}{2}\lim_{\epsilon\to 0}\Big|\frac{1}{\Gamma(z)}\int_0^{1}e^{({\rm   Re}(z)-1)\ln(y)}\ln(\epsilon+y)dy\Big|-
 \frac{1}{2} \frac{1}{\vert \Gamma(z)||z|}.  
\end{eqnarray*}
To obtain the first part in inequality in \eqref{JblowsinCinf-E1}, we distinguish two cases: $|\arg(z)|\ge \frac{\pi}{4}$ and $|\arg(z)|< \frac{\pi}{4}$. 
In the latter case, To justify the result on $\mathcal C_0[0,1],$ it suffices to consider  unity approximation sequence $g_n(t)=\frac{t}{n} 1_{[0,\frac{1}{n}]}(t)+ 1_{[\frac{1}{n},1]}(t)$. So

\begin{align*}\|J(z)\|_{\mathcal L(C_0)}&\ge   \|J(z)g_n\|_{\infty}
\\&\ge |\int_{0}^{t}g_n(t-s)s^{z-1}ds|\\
&\ge| \liminf\int_{0}^{t}g_n(t-s)s^{z-1}ds| \\
&  \ge| \int_{0}^{t} \liminf g_n(t-s)s^{z-1}ds|\\
& \ge \frac{1}{|z\Gamma(z)|} \\
& \ge \frac{1}{2\Gamma(z+1)} \frac{|z|}{\rm Re(z)}
\end{align*}
 
 In this case the desired estimate is obvious. 
In the second case (i.e $|\arg(z)|\ge \frac{\pi}{4}$),  using Lebesgue's dominated convergence theorem in the last estimate above we deduce that

\begin{align*}\|J(z)\|_\infty&\ge\frac{1}{2|\Gamma(z)|\Re(z)}- \frac{1}{2\vert \Gamma(z)\vert|z|}
\\&\ge\frac{1}{2|\Gamma(z)|\Re(z)}\big(1-\frac{\Re(z)}{\vert z\vert}\big)\\
&\ge\frac{1}{2|\Gamma(z)|\Re(z)}\big(1-\frac{\sqrt 2}{ 2}\big)\\
&\ge \frac{1}{4|\Gamma(z+1)|}\frac{|z|}{\Re(z)}.
\end{align*}
This last estimate establishes the first part in inequality (\ref{JblowsinCinf-E1}) with $C_1=\frac{1}{4|\Gamma(z+1)|}.$ \\
\textit{Step 2:} For each $f\in C_0[0,1]$ and $z\in \D_+$ we have 
\[\vert \Gamma(z) J(z)f(t)\vert\leq
\int_0^t\vert f(t-s)\vert s^{\hbox{Re}(z)-1}ds\leq \frac{\Vert
f\Vert_\infty}{\hbox{Re}(z)}.\] It follows that the second inequality in \eqref{JblowsinCinf-E1} holds with  $C_2:=\underset{z\in\D_+}{\sup} \frac{1}{\vert\Gamma(z+1)\vert}.$\\

\textit{Step 3:}

Let $f\in \lip_0^{\alpha}[0,\, 1]$ and $z\in \C$ such that  $\Re(z)>0.$ By a direct computation we obtain that 
\begin{align*}
\|J(z)f\|_{\alpha}=&\sup_{t\ne
t'}\Big\vert \frac{ (t-t')^{-\alpha}}{\Gamma(z)}\Big(\int_0^t\frac{f(t-s)-f(t'-s)}{s^{1-z}}ds+\int_t^{t'}\frac{f(t'-s)}{s^{1-z}}
ds\Big)\Big\vert.
\end{align*}
It follows

\begin{align*}
\|J(z)f\|_{\alpha}\le & \sup_{t\ne
    t'}\frac{1}{|\Gamma(z)|}\Big(\int_0^ts^{{\rm  Re}(z)-1}\frac{|f(t-s)-f(t'-s)|}{|t'-t|^{\alpha}}ds\\
&\qquad \qquad \quad +\int_t^{t'}s^{{\rm
Re}(z)-1}\frac{|f(t'-s)-f(0)|}{|t'-s|^{\alpha}} ds\Big)\\
\leq &  \sup_{t\ne
    t'}\frac{\|f\|_\alpha}{|\Gamma(z)|}\Big(\int_0^ts^{{\rm  Re}(z)-1}ds+ \int_t^{t'}s^{{\rm
Re}(z)-1}ds\Big)\\
\le &\frac{2\Vert f\Vert_\alpha}{\vert \Gamma(z)\vert}\int_0^{1}s^{\rm{Re}(z)-1}ds
\\=&\frac{2\Vert f\Vert_\alpha}{\vert \Gamma(z)\vert|\Re(z)|}
\end{align*}
Hence,
\[\|J(z)f\|_{\alpha}\le
\frac{1}{|\Gamma(z)|}\frac{\|f\|_{\alpha}}{{\rm   Re}(z)}\le
\frac{|z|}{|\Gamma(z+1)|}\frac{\|f\|_{\alpha}}{{\rm    Re}(z)}.\]
Thus
$$
\| J(z)\|_{\alpha}\le C  \frac{|z|}{{\rm   Re}(z)}
$$
where $C:=\sup_{z\in {\C_+}}\frac{2}{|\Gamma(z+1)|} $, in particular we
obtain $\|J(\alpha)\|_{\alpha}\le \frac{2}{\Gamma(\alpha+1)}.$ This completes the proof of the theorem using Proposition \ref{Proposition 2.1}.

\end{proof}
Let $G$ be the generator of $J$ in $E=C_0[0,1]$. It follows from
Theorem \ref{JblowsinCinf} that $J$ is not bounded on $D_+$ nor on $D_-$.
Consequently $\pm iG$ does not generate any $C_0-$semigroups.\\
The following property of the semigroup $J$ is needed.

\begin{proposition}\label{PropEN}\cite[Proposition 5.33 p.140]{EN2000} Let $0<\beta <\alpha<1$ and $J$ be the
Riemann-Liouville semigroup in $C_0[0,1]$. Then:\\
(i) $J(\alpha)C_0[0,1]\subset {\rm   Lip}_0^\alpha[0,1] $ and
$\Vert
J(\alpha)f\Vert_\alpha \leq\frac{2}{\Gamma(\alpha+1)}\Vert f\Vert_\infty.$\\
(ii) ${\rm   Lip}^\alpha_0[0,\, 1]\hookrightarrow D((-B)^\beta).$

\end{proposition}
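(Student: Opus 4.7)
\textbf{Proof plan for Proposition \ref{PropEN}.}

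For part (i), the plan is a direct computation exploiting the sign of $(t'-s)^{\alpha-1}-(t-s)^{\alpha-1}$. For $0\le t<t'\le 1$ and $f\in C_0[0,1]$, I would split
\[
J(\alpha)f(t')-J(\alpha)f(t)=\frac{1}{\Gamma(\alpha)}\int_t^{t'}(t'-s)^{\alpha-1}f(s)\,ds+\frac{1}{\Gamma(\alpha)}\int_0^t\bigl[(t'-s)^{\alpha-1}-(t-s)^{\alpha-1}\bigr]f(s)\,ds,
\]
and bound $|f|\le\|f\|_\infty$, using that $(t'-s)^{\alpha-1}-(t-s)^{\alpha-1}\le 0$ since $\alpha-1<0$. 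The three elementary integrals
$\int_t^{t'}(t'-s)^{\alpha-1}ds=(t'-t)^\alpha/\alpha$, $\int_0^t(t-s)^{\alpha-1}ds=t^\alpha/\alpha$, $\int_0^t(t'-s)^{\alpha-1}ds=[(t')^\alpha-(t'-t)^\alpha]/\alpha$ collapse the bound to $\frac{\|f\|_\infty}{\Gamma(\alpha+1)}[2(t'-t)^\alpha+t^\alpha-(t')^\alpha]$; dropping the non-positive tail $t^\alpha-(t')^\alpha$ yields the desired $\frac{2\|f\|_\infty}{\Gamma(\alpha+1)}(t'-t)^\alpha$. Since $J(\alpha)f(0)=0$ and $J(\alpha)f\in C_0[0,1]$ (as the semigroup is strongly continuous there), this places $J(\alpha)f$ in $\mathrm{Lip}_0^\alpha[0,1]$ with the stated norm.

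For part (ii), I would invoke the Balakrishnan formula for the non-negative operator $-B$. Since $S$ is a contraction $C_0$-semigroup, $-B$ is sectorial and, for $0<\beta<1$,
\[
(-B)^\beta f=\frac{\beta}{\Gamma(1-\beta)}\int_0^\infty t^{-\beta-1}\bigl(f-S(t)f\bigr)\,dt
\]
whenever the right-hand side converges as a Bochner integral in $C_0[0,1]$. For $f\in\mathrm{Lip}_0^\alpha$ one has the key dichotomy: if $t\in(0,1]$, the Lipschitz estimate gives $|f(s-t)-f(s)|\le\|f\|_\alpha t^\alpha$ when $s\ge t$ and $|f(s)|\le\|f\|_\alpha s^\alpha\le\|f\|_\alpha t^\alpha$ when $s<t$, so $\|S(t)f-f\|_\infty\le\|f\|_\alpha t^\alpha$; if $t\ge 1$, nilpotency of $S$ on $C_0[0,1]$ forces $S(t)=0$, so $\|S(t)f-f\|_\infty=\|f\|_\infty$. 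Splitting the Balakrishnan integral at $t=1$,
\[
\int_0^\infty t^{-\beta-1}\|f-S(t)f\|_\infty\,dt\le\|f\|_\alpha\int_0^1 t^{\alpha-\beta-1}\,dt+\|f\|_\infty\int_1^\infty t^{-\beta-1}\,dt,
\]
both pieces converge since $\alpha-\beta>0$ and $\beta>0$. Hence $f\in D((-B)^\beta)$ with $\|(-B)^\beta f\|_\infty\le C_{\alpha,\beta}\|f\|_\alpha$, giving the continuous inclusion.

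The principal technical point is in (ii): one must cleanly combine the Hölder control of $S(t)f-f$ near $t=0$ with the nilpotency that kills the semigroup for $t\ge 1$, because neither estimate alone produces an integrable weight against $t^{-\beta-1}$. In (i) the only subtlety is recognising that $(t')^\alpha-t^\alpha\ge 0$ so the unwanted term in the bound is non-positive and can be discarded, yielding the sharp constant $2/\Gamma(\alpha+1)$.
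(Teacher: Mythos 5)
Your argument is correct, and it is worth noting that the paper itself offers no proof here: Proposition 4.2 is simply quoted from \cite[Proposition 5.33]{EN2000}, so your write-up supplies a self-contained argument where the paper relies on a citation. Both halves check out. In (i), the decomposition into $\int_t^{t'}(t'-s)^{\alpha-1}f(s)\,ds$ plus $\int_0^t[(t'-s)^{\alpha-1}-(t-s)^{\alpha-1}]f(s)\,ds$, the three elementary integrals, and the observation that $t^\alpha-(t')^\alpha\le 0$ give exactly the constant $2/\Gamma(\alpha+1)$; together with $J(\alpha)f(0)=0$ this is the standard route to the Favard-type inclusion. In (ii), the dichotomy $\Vert f-S(t)f\Vert_\infty\le \Vert f\Vert_\alpha t^\alpha$ for $t\le 1$ (using $|f(s)|\le\Vert f\Vert_\alpha s^\alpha$ on the strip $s<t$ where the translated function vanishes) versus $S(t)=0$ for $t\ge 1$ is precisely what makes $t^{-\beta-1}\Vert f-S(t)f\Vert_\infty$ integrable at both ends, and $\Vert f\Vert_\infty\le\Vert f\Vert_\alpha$ turns the resulting bound into the continuous embedding. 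The one step you should anchor with a reference is the implication ``absolute convergence of the Balakrishnan integral $\Rightarrow f\in D((-B)^\beta)$'': this is not a tautology, but it is legitimate here because $\rho(B)=\mathbb{C}$, so $(-B)^\beta$ is closed and one can pass from $f_n=n(n-B)^{-1}f\in D(B)$ (for which the Balakrishnan representation is classical) to $f$ by dominated convergence and closedness; this is covered by \cite{Kom1966} or \cite[Section 3.4]{BB1967}, both already in the bibliography. With that citation added, your proof is complete and arguably more informative than the bare reference the paper gives.
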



In the next theorem, which is our main result, we show that the Riemann-Liouville semigroup $J$ admits a boundary group  on $\Lip_0^\alpha[0,1].$ This group defines therefore the
operation of fractional integration of imaginary order as a bounded  strongly continuous group in the H\"older spaces.
It will be important to note that in Proposition 2.1, we can replace $D$ by $D_R:=\{z\in \mathbb{C},\, Re(z)>0\, \vert z\vert \le R\}$ where $R$ is any fixed positive real number. This simple fact is seen by observing that if $R>0$ is a given number, and $(T(z))$ is a holomorphic semigroup of angle $\frac{\pi}{2}$  whose infinitesimal generator is $A$, then
\begin{equation*}
\sup_{z\in D_R}\Vert T(z)\Vert_{{\mathcal L}(E)} =\sup_{z\in D}\Vert T(Rz)\Vert_{{\mathcal L}(E)} .
\end{equation*}
It suffices then to note that ${(T(Rt))}_{t\ge 0}$ is a holomorphic semigroup and its generator is $RA.$
\begin{theorem}\label{J_holm_onLip} Let $0<\alpha<1$. Consider
 the Riemann-Liouville semigroup
    $J$ in the space $\Lip^\alpha_0[0,\, 1].$ Then
    \begin{enumerate}
        \item[(i)] The semigroup $J$ is  holomophic of
        angle $\frac{\pi}{2}.$ 
        \item[(ii)]
        In addition,
        ${\sup_{z\in D_\pm}}\Vert J(z)\Vert_{{\mathcal L}(E)} <\infty.$ 
    \end{enumerate}
Consequently the semigroup $J$ admits a boundary group on the imaginary axis.
\end{theorem}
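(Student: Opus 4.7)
The plan is to derive both assertions from a refinement of the estimates of Theorem~\ref{JblowsinCinf} and then invoke Proposition~\ref{Proposition 2.1}. Assertion (i) is immediate from the bound $\|J(z)\|_\alpha\le C|z|/\Re(z)$ in \eqref{JblowsinCinf-E2}: on every subsector $\Sigma_\theta$ with $\theta<\pi/2$ one has $|z|/\Re(z)\le \sec\theta$, hence $J$ is uniformly bounded on $\Sigma_\theta$, and strong continuity transfers to $\lip_0^\alpha[0,1]$, the maximal $J$-invariant subspace of $\Lip_0^\alpha[0,1]$ on which $J$ is strongly continuous; invariance is ensured by the commutation $J(z)S(t)=S(t)J(z)$. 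This yields a holomorphic $C_0$-semigroup of angle $\pi/2$ on $\lip_0^\alpha[0,1]$.

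For (ii) the bound $C|z|/\Re(z)$ blows up on $D_\pm$, so a sharper estimate exploiting the H\"older regularity of $f$ is needed. The key is the identity, valid for $f\in\Lip_0^\alpha[0,1]$ (so $f(0)=0$) and $t\in(0,1]$,
\[
J(z)f(t) \,=\, \frac{f(t)\,t^{z}}{\Gamma(z+1)} \;+\; \frac{1}{\Gamma(z)}\int_0^t (t-s)^{z-1}\bigl[f(s)-f(t)\bigr]\,ds,
\]
obtained by adding and subtracting $f(t)/\Gamma(z)\cdot\int_0^t(t-s)^{z-1}ds$. The H\"older cancellation $|f(s)-f(t)|\le\|f\|_\alpha(t-s)^\alpha$ turns the singular kernel $(t-s)^{\Re(z)-1}$ into the integrable $(t-s)^{\Re(z)+\alpha-1}$, whose integral is at most $1/\alpha$; the $1/\Re(z)$ blow-up is thereby absorbed by the $\alpha$-size H\"older factor. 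The boundary term is harmless since $|f(t)t^z|\le\|f\|_\alpha t^{\alpha+\Re(z)}$ and $1/|\Gamma(z+1)|$ is bounded on the bounded set $D_+\cup D_-$.

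To reach the H\"older seminorm one applies the decomposition at both $t$ and $t+h$ and bounds each piece of the difference by $C\|f\|_\alpha h^\alpha$. The boundary contribution $f(t+h)(t+h)^z-f(t)t^z$ is split as $[f(t+h)-f(t)](t+h)^z+f(t)[(t+h)^z-t^z]$ and handled in the two regimes $h\le t$ (using $|(t+h)^z-t^z|\le C|z|h\,t^{\Re(z)-1}$) and $h>t$ (using $|f(t)|\le\|f\|_\alpha h^\alpha$). The integral piece splits into three sub-terms: a boundary integral over $[t,t+h]$ bounded by $\|f\|_\alpha h^{\Re(z)+\alpha}/\alpha$; a term $[f(t)-f(t+h)]\cdot((t+h)^z-h^z)/z$ bounded by $\|f\|_\alpha h^\alpha/|\Gamma(z+1)|$; and the decisive kernel-difference term
\[
\int_0^t\bigl[(t+h-s)^{z-1}-(t-s)^{z-1}\bigr]\bigl[f(s)-f(t)\bigr]\,ds,
\]
which one bounds by splitting $u=t-s$ at $h$ and using the mean-value estimate $|(u+h)^{z-1}-u^{z-1}|\le C|z|h\,u^{\Re(z)-2}$ on $\{u\ge h\}$ together with the pointwise bound on $\{u\le h\}$, the factor $(t-s)^\alpha$ absorbing the extra singularity.

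Once $\sup_{z\in D_\pm}\|J(z)\|_\alpha<\infty$ is in hand, Proposition~\ref{Proposition 2.1} --- applied after the rescaling $J(\cdot)\mapsto J(R\,\cdot)$ that permits replacing $D$ by $D_R=\{z\in\C:\Re(z)>0,\,|z|\le R\}$ for any $R>0$ --- delivers the two boundary semigroups $(J(\pm it))_{t\ge 0}$ on $\lip_0^\alpha[0,1]$, which together constitute the claimed boundary $C_0$-group $(J(is))_{s\in\R}$. The main obstacle is the kernel-difference estimate above: the interplay between the singular kernel and the H\"older modulus of $f$ requires several case-splits, and only the size-$\alpha$ cancellation inside the $\Gamma$-normalized integral prevents the aggregate bound from blowing up as $\Re(z)\to 0^+$.
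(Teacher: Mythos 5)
Your proposal is correct and follows essentially the same route as the paper: part (i) from the sector bound $C|z|/\Re(z)$ of Theorem \ref{JblowsinCinf}, and part (ii) by estimating the H\"older seminorm of the increment of $J(z)f$ through the same three-way decomposition (boundary integral over $[t,t+h]$, the $f(t)[(t+h)^z-t^z]/z$ term with a case split on $h$ versus $t$, and the kernel-difference integral controlled by the H\"older cancellation plus the mean-value bound split at $u=h$), followed by the restriction to a small disk $D_R$ with $R$ depending on $\alpha$ and the rescaling remark that lets Proposition \ref{Proposition 2.1} apply. The only difference is cosmetic: you add and subtract $f(t)$ inside the integral before differencing, whereas the paper differences first and regroups, but the resulting terms and estimates coincide.
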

\begin{proof}
The first item (i) is an immediate consequence of Theorem \ref{JblowsinCinf}.
 We shall prove the estimate (ii) with $R=\frac{1-\alpha}{2}$. Let $h=t-t'>0$ where  $ (t,t')\in[0,1]^2$ and $z\in D_R$ Then we have

    \begin{align*}
    \Gamma(z) (J(z)f(t)-&J(z)f(t'))\\&=\int_0^tf(t-s)s^{z-1}ds-\int_0^{t'}f(t'-s)s^{z-1}ds\\
    =&\int_{-h}^{t'}f(t'-s)(s+h)^{z-1}ds+\int_0^{t'} f(t'-s)s^{z-1}ds\\
    =&\int_{-h}^{0}f(t'-s)(s+h)^{z-1}ds+\int_{0}^{t'}f(t'-s)[(s+h)^{z-1}-s^{z-1}]ds\\
    =& \int_{-h}^{0}f(t'-s)(s+h)^{z-1}ds+\int_{0}^{t'}(f(t'-s)-f(t'))[(s+h)^{z-1}-s^{z-1}]ds+\\
    & +\int_{0}^{t'}f(t')[(s+h)^{z-1}-s^{z-1}]ds\\
    =:&I_1+I_2+I_3.\\
    \end{align*}

First, we estimate  the sum of the two  integrals $I_1$ and $I_3.$ We have
by a direct computation
    \begin{align*}
    I_1+I_3  =&   \int_{0}^{h}f(t'-s+h)s^{z-1}ds+f(t')\int_{0}^{t'}[(s+h)^{z-1}-s^{z-1}]ds\\
    =& \int_{0}^{h}f(t-s)s^{z-1}ds+f(t')\int_{0}^{t'}[(s+h)^{z-1}-s^{z-1}]ds\\
    =& \int_{0}^{h}\frac{f(t-s)-f(t)}{s^{\alpha}}s^{\alpha +z-1}ds+\int_{0}^{h}f(t)s^{z-1}ds+f(t')\int_{0}^{t'}[(s+h)^{z-1}-s^{z-1}]ds\\
    =& \int_{0}^{h}\frac{f(t-s)-f(t)}{s^{\alpha}}s^{\alpha +z-1}ds+\frac{f(t)}{z}h^{z}+\frac{f(t)}{z}(t^z-h^z-t'^z)\\
    =& \int_{0}^{h}\frac{f(t-s)-f(t)}{s^{\alpha}}s^{\alpha +z-1}ds+\frac{f(t)-f(t')}{z}h^z+\frac{f(t')}{z}(t^z-t'^z).\\
    =:& K_1+K_2+K_3
    \end{align*}
    It is easy to check that for $\Re(z)>0,$
    \begin{equation}\label{est-k1}
    |K_1|\le \int_0^h  \|f\|_{\alpha}s^{{\rm   Re}(z)+\alpha-1}ds\le \|f\|_{\alpha}\frac{h^{\alpha+{\rm   Re}(z)}}{\alpha+{\rm   Re}(z)}\le \Vert f\Vert_\alpha h^\alpha/\alpha.
   \end{equation}
\noindent  and
    \begin{equation}\label{est-k2}
    |K_2|\le \frac{h^{\alpha+{\rm   Re}(z)}{\|f\|_\alpha}}{|z|}\le \frac{h^\alpha{\|f\|_\alpha}}{{\vert z\vert}}.
    \end{equation}
 \noindent  To estimate the term $K_3$ we discuss two following two cases

 \textit{Case 1:} $t'\le 2h$. Then 
    \begin{align*}
    |\frac{1}{h^{\alpha}}\frac{f(t')}{z}(t^z-t'^z)|\le& \frac{t'^{\alpha}}{h^{\alpha}}\vert t^z-t'^z\vert\frac{\|f\|_{\alpha}}{|z|}\\
    \le& \frac{t'^{\alpha}}{h^{\alpha}}( t^{{\rm  Re}(z)}+t'^{{\rm   Re}(z)})\frac{\|f\|_{\alpha}}{|z|}\\
    \le& \frac{2^{1+\alpha}}{\vert z\vert}{\|f\|_{\alpha}}
    \end{align*}
where we have used  that  $|f(t')|\le t'^{\alpha}\|f\|_{\alpha}$.

  \textit{Case 2:}  $t'>2h.$ Then we have

   \begin{align*}
   \big|\frac{1}{h^{\alpha}}\frac{f(t')}{z}(t^z-t'^z)\big|\le& \frac{t'^{\alpha}}{h^{\alpha}}\frac{\|f\|_{\alpha}}{|z|}\vert t^z-t'^z \vert\\
   \le& \frac{1}{|z|}\frac{t'^\alpha}{h^\alpha}\|f\|_{\alpha}\Big\vert\int_{t'}^t z u^{z-1} du\Big\vert\\
    \le& \|f\|_{\alpha} h^{1-\alpha}t'^\alpha\sup_{t'<u<t}{u^{{\rm   Re}(z)-1}}\\
     \le& \|f\|_{\alpha} h^{1-\alpha} {t'^{\alpha+{\rm   Re}(z)-1}}\\
    \le & \|f\|_{\alpha} h^{1-\alpha}2^{\alpha+{\rm   Re}(z)-1}h^{\alpha+{\rm   Re}(z)-1}\\
     \le & \|f\|_{\alpha} 2^{\alpha+{\rm   Re}(z)-1}h^{{\rm   Re}(z)}\\
    \le &  \|f\|_{\alpha}.
   \end{align*}
 {Thus we conclude that 
\begin{equation}\label{est-k3}
K_3\le \frac{2^{1+\alpha}}{\vert z\vert} h^\alpha\Vert f\Vert_\alpha.
\end{equation}
Combining \eqref{est-k1}, \eqref{est-k2} and \eqref{est-k3} we deduce that 
\begin{equation}\label{est-I_1+I_3}
I_1+I_3\leq h^\alpha\|f\|_\alpha\Big(\frac{1+2^{1+\alpha}}{|z|}+\alpha^{-1}\Big).
\end{equation}}
%

Now we come back to $I_2$
\begin{align*}
I_2=&\vert\int_{0}^{t'}(f(t'-s)-f(t'))[(s+h)^{z-1}-s^{z-1}]ds\vert\\
 \le&
 \|f\|_{\alpha}\vert\int_{0}^{t'}s^{\alpha}|(s+h)^{z-1}-s^{z-1}|ds\vert \\
\le& \|f\|_{\alpha}\int_{0}^{t'}s^{\alpha}h^{{\rm
Re}(z)-1}|(1+\frac{s}{h})^{z-1}-
(\frac{s}{h})^{z-1}|ds\\
\le & \|f\|_{\alpha}\int_{0}^{\frac{t'}{h}}h^{{\rm
Re}(z)+\alpha}s^{\alpha}
|(1+s)^{z-1}-s^{z-1}|ds\\
\le & \|f\|_{\alpha} h^{{\rm   Re}(z)+\alpha}
\int_{0}^{\infty}s^{\alpha+
{\rm   Re}(z)-1}|(1+\frac{1}{s})^{z-1}-1|ds\\
\le & \|f\|_{\alpha} h^{{\rm   Re}(z)+\alpha}\Big( \int_{0}^{2}s^{\alpha+{\rm   Re}(z)-1}|(1+\frac{1}{s})^{z-1}-1|ds\\
&+ \int_{2}^{\infty}s^{\alpha+{\rm   Re}(z)-1}|(1+\frac{1}{s})^{z-1}-1|ds\Big)\\
=:& \|f\|_{\alpha} h^{{\rm   Re}(z)+\alpha}(J_1+J_2).
\end{align*}

\noindent As above, we will treat separately $J_1$ and $J_2$. For $J_1,$ one can verify that if ${\rm   Re}(z)<1,$ we have
\begin{align*}
J_1=&\int_{0}^{2} s^{\alpha+{\rm
Re}(z)-1}|(1+\frac{1}{s})^{z-1}-1|ds\\
\le & \int_{0}^{2} s^{\alpha+{\rm   Re}(z)-1}((1+\frac{1}{s})^{{\rm   Re}(z)-1}+1)ds) \\
\le&   \int_{0}^{2} s^{\alpha+{\rm   Re}(z)-1}(2^{{\rm   Re}(z)-1}+1)ds)\\
\le &  (2^{{\rm   Re}(z)-1}+1)\frac{2^{{\rm   Re}(z)+\alpha}}{{\rm   Re}(z)+\alpha}\\
\le& \frac{2^{1+{\rm   Re}(z)+\alpha}}{{\rm   Re}(z)+\alpha}.
\end{align*}

\noindent We now we estimate $J_2.$ For that we note that  $ \vert (1+\frac{1}{s})^{z-1}-1\vert\le \frac{1}{s}\vert z-1\vert$ for every $s>0$ and $ \Re(z)<1.$ Indeed, if $g(u)=(1+{u})^{z-1},\, u\ge 0$ then $g^\prime(u)=(z-1)(1+{u})^{z-2}.$
By the mean value inequality,
$\vert g(u)-g(0)\vert=\vert (1+{u})^{z-1}-1\vert\le u\sup_{v>0}\vert (z-1)(1+{v})^{z-2}\vert.$ But $\vert (z-1)(1+{v})^{z-2}\vert=\vert (z-1)\vert (1+{v})^{\rm Re(z)-2}\vert\le \vert z-1\vert$
for $\rm Re(z)<1.$ From this, it follows that
\begin{align*}
J_2=\int_{2}^{+\infty} s^{\alpha+{\rm
Re}(z)-1}|(1+\frac{1}{s})^{z-1}-1|ds\le&
 \vert z-1\vert\int_{2}^{+\infty} s^{\alpha+{\rm   Re}(z)-2}ds\\
\le &\frac{\vert z-1\vert}{1-\alpha-\rm{Re}(z)}
\end{align*}
 for all $z\in\C^+$ such that $1-({\rm   Re}(z)+\alpha)>0$
Therefore, we have
\begin{eqnarray*}
I_2\le \Vert f\Vert_\alpha h^{\alpha+\rm{Re}(z)}\Big[\frac{2^{1+\alpha+\Re(z)}}{\Re(z)+\alpha}+\frac{\vert z-1\vert}{1-\alpha-\rm{Re}(z)}\Big]
\end{eqnarray*}
From this, we conclude that for $0<|z|\le \frac{1-\alpha}{2}$ we have
\begin{equation}\label{est-I2}
I_2\le h^\alpha \Vert f\Vert_\alpha (\frac{4}{\alpha}+\frac{2\vert z-1\vert}{1-\alpha}).
\end{equation}

Finally, combining \eqref{est-I_1+I_3} and \eqref{est-I2} we obtain 
$$\|J(z)\|_\alpha\le \frac{1}{\vert\Gamma(z)\vert}(\frac{5}{\alpha}+\frac{2\vert z-1\vert}{1-\alpha})+\frac{1+2^{1+\alpha}}{\vert\Gamma(z+1)\vert}$$
 for $0<|z|\le \frac{1-\alpha}{2}.$ This   completes the proof of the theorem.

\end{proof}

Recall that $S$ is a strongly continuous semigroup on $C_0.$ Since
$C^1_{00}\subset D((-B)^\alpha=J(\alpha)C_0\hookrightarrow {\rm
 Lip}_0^\alpha[0,1]$ we obtain that $D((-B)^\alpha$ is dense in ${\rm
 Lip}_0^\alpha[0,1].$\\
  Seen as  a linear application on $C_0[0,1]$ with values in   $D((-B)^\alpha),$  $J(\alpha)$ is an
 isomorphism. We deduce from
 above  that $J$ induced  also a holomorphic semigroup in
 $[D((-B)^\alpha]$ of angle $\pi/2$ which is not locally bounded. The following diagrams illustrate the foregoing. description.\\

%
%
%
%
%

On one hand, the behavior of $J(z)$ on $C_0[0,1]$ and on $D((-B)^\alpha)$ is the same since it embeds the first space in the second so the following diagram is commutative

\begin{center}

    \begin{tabular}{ccc}
        $C_0[0,\,1]$&$\overset{J(z)}{\longrightarrow} $& $C_0[0,1]$ \\
        $\bigg\downarrow J(\alpha)$& &$\bigg\uparrow J(\alpha)^{-1}$\\
        $D((-B)^\alpha)$&$\overset{J(z)}{\longrightarrow}$ &$D((-B)^\alpha)$\\

\end{tabular}

\end{center}

On the other hand, according to theorem \ref{J_holm_onLip}, $J(z)_z$ is locally bounded on ${\rm  Lip}^\alpha_0[0,\,1]$ but not on $D((-B)^\alpha).$ It yields that the diagram

\begin{center}

    \begin{tabular}{ccc}

        $D((-B)^\alpha)$&$\overset{J(z)}{\longrightarrow}$ &$D((-B)^\alpha)$\\
        $\DownArrow[1.5cm][>=stealth, thick, dashed]  i$& &$\UpArrow[1.5cm][>=stealth, thick, dashed] $\\
        ${\rm   Lip}^\alpha_0[0,\,1]$&$\overset{J(z)}{\longrightarrow} $ &${\rm   Lip}^\alpha_0[0,\,1]$ \\
    \end{tabular}


\end{center}

cannot commute.

It is important to recall that  the norms considered here are the natural ones induced by the considered embeddings. For example, $D((-B)^\beta)$ is endowed with the norm $\|y\|_{D((-B)^\beta}=\|J(\beta)x\|_{\infty}$ where $x$ is the unique element such as $J(\beta)x=y.$ As an immediate consequence we remark that since $J(z)$ is locally bounded in $ {\rm   Lip}^\alpha_0[0,\,
1]$ but it is not in $D((-B)^\alpha)$, then in truth $D((-B)^\alpha)\subsetneq {\rm   Lip}^\alpha_0[0,\,
1]. $
\section{From the boundary  group to the semigroup}
\setcounter{theorem}{0}
 \setcounter{equation}{0}

This section is devoted to the inverse problem on the half plane.
Let $(T(it))_{t\in \mathbb{R}}$ be a $C_0-$group  with generator $iA.$ Is it
the boundary value of some holomorphic semigroup of angle $\pi/2$?\\
 A great work was done before in this direction.
This problem was studied in \cite{EM1994} and \cite{EK1996}  and affirmatively solved under
spectral conditions $(s(A):=\sup\{{\rm   Re}(z); \  z\in
\sigma(A)\}<+\infty)$ on the generator and a growth condition (non
quasi analyticity) on the group. The problem was also studied by
Zsid\'o and Cioranescu \cite{CZ1976} in terms of analytic generators.
\\
Let $iA$ be the generator de the $C_0-$group $(T(it))_{t\in
\mathbb{R}}$ in the $E$ and $w\geq 0$ such that $\sup_{t\in
\mathbb{R}}e^{-w\vert t\vert}\Vert T(it)\Vert<\infty.$ Then
$\sigma(iA)\subset \{z\in\mathbb{C};\ -w\leq {\rm   Re}(z) \leq
w\}$ which means $\sigma(A)=\sigma\subset \{z\in\mathbb{C};\ -w\leq {\rm
Im }(z) \leq w\}.$ In this section, we assume that:

\begin{equation}\label{split_specrum}
\sigma=\sigma_1 \cup \sigma_2 \ \mbox{such that}\ \sigma_1\subset
\{z\in  \C, {\rm   Re}(z)< -\delta \}\  \mbox{and}\
\sigma_2\subset \{z\in  \C, \ {\rm   Re}(z) >+\delta \}
\end{equation}
for some $\delta>0.$ This situation generalizes that one treated in \cite{EM1994_bis} where the spectrum $\sigma$ is assumed belonging entirely in a half plane with a single connected component ($\sigma\subset\mathbb C_-$.) A similar and more general result is given by the following:

\begin{proposition}\label{proposition_splitting}
   Let  $(T(it))_{t\in
\mathbb{R}}$ be a $C_0-$group in $E$  with generator $iA$. Assume
furthermore that $\sigma(A)=\sigma_1\cup \sigma_2$ with
$\sigma_1\subset \{z\in  \C, {\rm   Re}(z)< -\delta \}\ \mbox{ and
}\ \sigma_2\subset \{z\in  \C, \ {\rm   Re}(z) >+\delta \}$. Then
there exist two $T(it)-$invariant and closed subspaces $E_1$ and
$E_2$ of $E$
such that:\\
        {\rm   (i)} $D(A)\subset E_1\oplus E_2,$\\
        {\rm   (ii)}$A_1:=A_{\shortmid_{ E_1}}$ generates a  holomorphic semigroup $T_1$ of
        angle $\frac{\pi}{2}$ in $E_1$ and $\sigma(A_1)=\sigma_1,$\\
        {\rm   (iii))}$-A_2:=-A_{\shortmid_{ E_2}}$ generates a holomorphic semigroup $T_2$ of angle $\frac{\pi}{2}$
        and $\sigma (A_2)=\sigma_2.$
\end{proposition}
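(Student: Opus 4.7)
The strategy is to construct spectral projections $P_{1},P_{2}$ associated with $\sigma_{1},\sigma_{2}$, set $E_{j}:=P_{j}E$, and then apply to each restriction the single half-plane result from \cite{EM1994_bis}. From the group hypothesis $\|T(it)\|\le Me^{w|t|}$ (for some $w\ge 0$) and the Laplace representation of the resolvent of $iA$ on $\{\pm\Re\mu>w\}$, one obtains $\sigma(A)\subset\{|\Im\lambda|\le w\}$ and the axial estimate $\|R(\lambda,A)\|\le M/(|\Im\lambda|-w)$ for $|\Im\lambda|>w$. Combined with the hypothesis $\{|\Re\lambda|<\delta\}\subset\rho(A)$, this shows that $R(\cdot,A)$ is holomorphic on the imaginary axis with decay of order $|\Im\lambda|^{-1}$ at infinity.

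Next I would construct the projections. Because $\sigma_{1}$ and $\sigma_{2}$ may be unbounded in the real direction, no closed Riesz contour encloses either piece, so the natural substitute is a Dunford integral along the imaginary axis
\[
P_{2}x\;:=\;\frac{1}{2\pi i}\int_{-i\infty}^{+i\infty}R(\lambda,A)\,x\,d\lambda,
\]
interpreted in a regularized sense. Concretely, I would insert a holomorphic cutoff $(1+\varepsilon\lambda^{2})^{-N}$ with $N$ large enough to make the integrand norm-integrable, define $P_{2}^{(\varepsilon)}$ by the resulting convergent integral, verify $(P_{2}^{(\varepsilon)})^{2}=P_{2}^{(\varepsilon)}$ up to a vanishing error using the resolvent equation, and pass to the limit $\varepsilon\downarrow 0$ on the dense subspace $D(A^{2N})$. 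Equivalently one may use the functional calculus afforded by the $C_{0}$-group, taking $P_{2}:=\varphi(A)$ for $\varphi$ locally constant on a disjoint neighborhood $U_{1}\cup U_{2}$ of $\sigma(A)$ with $\varphi=0$ on $U_{1}$ and $\varphi=1$ on $U_{2}$. Setting $P_{1}:=I-P_{2}$ yields the complementary projection; both commute with every $T(it)$ since they are functions of $A$.

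Setting $E_{j}:=P_{j}E$ produces closed $T(it)$-invariant subspaces with $D(A)\subset E_{1}\oplus E_{2}$ (the decomposition need not exhaust $E$, but it contains $D(A)$, which is enough for (i)). A standard resolvent argument, using that $R(\lambda,A)$ leaves each $E_j$ invariant and that $R(\lambda,A|_{E_j})$ is the restriction of $R(\lambda,A)$, shows $\sigma(A|_{E_{j}})=\sigma_{j}$; this yields (ii) and (iii) modulo the holomorphy claim. Applying \cite{EM1994_bis} to $A_{1}=A|_{E_{1}}$ (spectrum in $\{\Re\lambda<-\delta\}$, still generator of the induced $C_{0}$-group on $E_1$) and to $-A_{2}=-A|_{E_{2}}$ (spectrum in $\{\Re\lambda<-\delta\}$ after the sign flip) then delivers the desired holomorphic semigroups of angle $\pi/2$ on $E_{1}$ and $E_{2}$. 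The genuinely delicate step is the second: the slow axial decay of $R(\lambda,A)$ prevents a naive Dunford contour, so the construction of $P_{2}$ must be carried out with an explicit regularization, and one must verify both that the limit exists on a dense domain and that it is independent of the auxiliary parameters before extending by continuity; once the projections are in place, the rest of the argument follows standard spectral-theoretic lines.
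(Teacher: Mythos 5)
Your overall architecture --- split the space by a spectral projection attached to the separating vertical line and then feed each piece to the half-plane theorem of \cite{EM1994_bis} --- is sensible, and you correctly identify the construction of the projection as the delicate step. But that construction does not work as written. The principal value of $\frac{1}{2\pi i}\int_{-i\infty}^{+i\infty}R(\lambda,A)x\,d\lambda$ is not even formally idempotent: in the scalar model $R(\lambda,\mu)=(\lambda-\mu)^{-1}$ one finds $\frac{1}{2\pi i}\,\mathrm{p.v.}\!\int_{i\mathbb{R}}\frac{d\lambda}{\lambda-\mu}=\pm\frac12$ according to the sign of ${\rm Re}(\mu)$, never $0$ or $1$. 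So your regularized operators converge to (a version of) $\tfrac12(P_1-P_2)$ rather than to a spectral projection, and the step ``verify $(P_{2}^{(\varepsilon)})^{2}=P_{2}^{(\varepsilon)}$ up to a vanishing error'' cannot succeed; one must either add $\tfrac12 I$ or use a contour whose winding number is $1$ around one spectral piece and $0$ around the other. Three further problems: the regularizer $(1+\varepsilon\lambda^2)^{-N}$ has zeros of its denominator at $\lambda=\pm i\varepsilon^{-1/2}$, i.e.\ on your contour of integration; the ``functional calculus'' alternative is not available, since no function holomorphic on a neighbourhood of the strip $\{|{\rm Im}(z)|\le w\}$ containing $\sigma(A)$ can be locally constant with two distinct values; and if $P_1=I-P_2$ were everywhere-defined bounded projections then $E=P_1E\oplus P_2E$, which is false in general --- the paper's Example 5.4 ($E=L^1_{2\pi}$, $E_1=H^1$ not complemented, by Newman's theorem) shows the projection must be unbounded, so the definition $E_j:=P_jE$ has to be replaced by something like $E_1=\{x:\ PA^{-1}x=A^{-1}x\}$ and $E_2=\ker(PA^{-1})$ for a suitable bounded regularization $PA^{-1}$.

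For comparison, the paper sidesteps the line integral entirely: it defines $T_1(t)x=\frac{1}{2\pi i}\int_\Gamma e^{t\lambda}R(\lambda,A)x\,d\lambda$ over a sectorial contour $\Gamma\subset\rho(A)$ with $\sigma_1$ to its left (the factor $e^{t\lambda}$ forces absolute convergence and automatically produces the correct winding), proves the semigroup law from the resolvent equation, recovers the projection as $Px=\lim_{t\to 0^+}T_1(t)x$ for $x\in D(A)$, and then establishes generation and $\sigma(A_j)=\sigma_j$ directly via the Laplace transform and a Phragm\'en--Lindel\"of argument rather than by citing \cite{EM1994_bis}. Your reduction to \cite{EM1994_bis} would be a legitimate shortcut once correct projections and invariant subspaces are in hand, but constructing them is precisely the content of the proposition, and that is where your argument has a genuine gap.
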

	Proposition \ref{proposition_splitting} generalizes   the splitting theorem established in \cite{EM1994_bis}. Indeed, it suffices to choose $\sigma_1=\emptyset$ in order to recover the main theorem 1 therein.\\
 The proof gives an explicit construction of these spaces and semigroups based on a $A-$bounded
 spectral projection.\\
  Let  $(T(it))_{t\in
\mathbb{R}}$ be a $C_0-$group in $E$  with generator $iA$. Then there exists $w\geq 0$ such that $M:= \sup_{t\in
\mathbb{R}}e^{-w\vert t \vert}\Vert T(it)\Vert<\infty .$ In
particular $\sigma_1\subset \{z\in \mathbb{C}; \ -w\leq \hbox{
Im}(z)\leq w, \hbox{Re}(z)\geq \delta\}$ and
$\sigma_2\subset\{z\in \mathbb{C}; \ -w\leq \hbox{ Im}(z)\leq w,
\hbox{Re}(z)\leq -\delta\}.$ Moreover, the resolvent of $\pm iA$
is given by the Laplace transform of $(T(\pm it))_{t\geq 0}$ and
consequently
$$\vert {\rm Re}(\lambda)-w\vert \Vert R(\lambda,\pm iA)\Vert\leq
M \qquad \mbox{for}\quad {\rm Re}(\lambda)\geq w.$$
The latter estimate guarantees in particular the absolute convergence of all integrals involved below.\\
Let $0<r<\delta$ and $\beta \in (\frac{\pi}{2},\pi)$ such that $\sigma_1$ lies in the left side of the
path $\Gamma=(-\infty e^{-i\beta},re^{-i\beta}]\cup
\{re^{i\theta};\ -\beta \leq \theta\leq \beta\} \cup
[re^{i\beta},\infty e^{i\beta}) .$
 For all $t>0$
 define
$$ T_1(t)x:= \frac{1}{2i\pi}\int_{\Gamma}e^{z\lambda}R(\lambda,A)xd\lambda,\
T_2(t)x:=
\frac{1}{2i\pi}\int_{\Gamma}e^{z\lambda}R(\lambda,-A)xd\lambda,$$
then  $T_k(t)\in \mathcal{L}(E)$ is well defined and does not
depend  on the choice of $\beta , r $ for $k=1, 2$. Furthermore,
we have  the following lemma.
\begin{lemma}\label{smgprop} Let $k\in \{1,2\}.$
    The  family $(T_k(t))_{t\ge 0}$ satisfies   the semigroup property .
\end{lemma}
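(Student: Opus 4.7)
My plan is to prove both semigroup identities by the standard Cauchy--Dunford functional calculus argument; I detail $k=1$, the case $k=2$ being obtained verbatim with $A$ replaced by $-A$. Since $T_1(t)$ is independent of the admissible contour of the prescribed type, I would represent $T_1(t)$ and $T_1(s)$ using two disjoint contours $\Gamma_1$ and $\Gamma_2$ of the same shape, with $\Gamma_2$ strictly ``inside'' $\Gamma_1$: every $\mu\in\Gamma_2$ lies in the region enclosed by $\Gamma_1$ (the region containing $\sigma_1$), while every $\lambda\in\Gamma_1$ lies outside the region enclosed by $\Gamma_2$. A concrete choice is to take two radii $r_2<r_1<\delta$ together with a tiny horizontal shift of one contour so that the two paths become truly disjoint.

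Using Fubini and the resolvent identity $R(\lambda,A)R(\mu,A)=(\lambda-\mu)^{-1}[R(\mu,A)-R(\lambda,A)]$, the product takes the form
\begin{equation*}
T_1(t)T_1(s)x=\frac{1}{(2\pi i)^2}\int_{\Gamma_1}\!\int_{\Gamma_2}\frac{e^{t\lambda+s\mu}}{\lambda-\mu}\bigl[R(\mu,A)-R(\lambda,A)\bigr]x\,d\mu\,d\lambda.
\end{equation*}
I would treat the two summands separately. For the $R(\mu,A)$ piece, integrate in $\lambda$ first and close $\Gamma_1$ at infinity in the left half-plane, where $|e^{t\lambda}|=e^{t\rho\cos\beta}\to 0$ because $\beta>\pi/2$; since $\mu$ lies inside $\Gamma_1$, the residue theorem yields $\int_{\Gamma_1}e^{t\lambda}(\lambda-\mu)^{-1}d\lambda=2\pi i\,e^{t\mu}$. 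For the $R(\lambda,A)$ piece, integrate in $\mu$ first and close $\Gamma_2$ the same way; this time $\lambda$ lies outside $\Gamma_2$, no pole is enclosed, and the integral vanishes. What survives reassembles as
\begin{equation*}
T_1(t)T_1(s)x=\frac{1}{2\pi i}\int_{\Gamma_2}e^{(t+s)\mu}R(\mu,A)x\,d\mu=T_1(t+s)x.
\end{equation*}

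The technical heart of the argument is the justification of Fubini and of the closing of the contours. On the rays of each contour one has exponential decay of $e^{t\lambda}$ in the radial variable, and from the group bound $\|T(\pm it)\|\le M e^{w|t|}$ one extracts by Laplace representation the resolvent estimate $\|R(\lambda,A)\|\le M/(|\operatorname{Im}\lambda|-w)$ valid whenever $|\operatorname{Im}\lambda|>w$, so that $\|R(\lambda,A)\|$ decays like $1/|\lambda|$ on the rays. These two facts together force absolute convergence of the double integral (justifying Fubini) and show that the arcs at radius $R\to\infty$ used to close the contours contribute nothing. This estimation is the only genuinely non-trivial ingredient; once it is in place, the algebra above delivers the semigroup law immediately.
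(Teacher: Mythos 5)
Your proof is correct and follows essentially the same route as the paper: both arguments represent the two factors over nested contours $\Gamma$ and $\Gamma'$, apply the resolvent identity together with Fubini, and evaluate the inner integral by Cauchy's formula (one term contributing $e^{t\mu}$ because the pole is enclosed, the other vanishing), reassembling into $T_1(t+s)x$. Your added care in justifying Fubini and the closing of the contours via the decay $\|R(\lambda,A)\|\lesssim 1/|\operatorname{Im}\lambda|$ on the rays is exactly the estimate the paper invokes just before the lemma to guarantee absolute convergence.
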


\begin{proof} Let $k=1$ and $t,s>0$. Let $\Gamma'=\Gamma'(r',\beta')$ with $\beta'>\beta$ and $r'<r.$ We have by the resolvent equation and the Cauchy formula:
    \begin{eqnarray*}
    T_1(t)T_1(s)x&=& T_1(t)\frac{1}{2i\pi}\int_{\Gamma'}e^{s\lambda}R(\lambda,A)xd\lambda\\
    &=& \frac{1}{2i\pi}\int_{\Gamma}e^{t\lambda}R(\lambda,A)(\frac{1}{2i\pi}\int_{\Gamma'}e^{s\mu}{R(\mu,A)}d\mu)d\lambda\\
    &=& \frac{1}{2i\pi}\int_{\Gamma}e^{t\lambda}(\frac{1}{2i\pi}\int_{\Gamma'}e^{su}R(\lambda,A)R(u,A)xd\mu)d\lambda\\
    &= & \frac{1}{2i\pi}\int_{\Gamma}{e^{t\lambda}}(\frac{1}{2i\pi}\int_{\Gamma'}e^{su}\frac{-R(\lambda,A)+R(\mu,A)}{\mu-\lambda}xd\mu)d\lambda\\
    &= & \frac{1}{2i\pi} \int_{\Gamma}e^{\lambda(t+s)} {R(\lambda,A)}xd\lambda  \\
    &= & T_1(t+s)x \quad \mbox{for all}\quad x\in E.
    \end{eqnarray*}
Replacing $A$ by $-A$ we obtain also by this calculation
$T_2(t)T_2(s)=T_2(t+s).$

\end{proof}
Observe that for all $s>0$ and all $x\in D(A)$ we have

\begin{eqnarray*}
    T_1(s)x-x&=& \frac{1}{2i\pi}\int_{\Gamma}e^{s\lambda}R(\lambda,A)xd\lambda-x\\
    &=& \frac{1}{2i\pi}\int_{\Gamma}e^{s\lambda}[R(\lambda,A)x-\frac{x}{\lambda}]d\lambda\\
    &=& \frac{1}{2i\pi}\int_{\Gamma}e^{s\lambda}\frac{R(\lambda,A)Ax}{\lambda}xd\lambda.\\
\end{eqnarray*}

 The right hand side in the last equality does not always coincide
with $x$ when $s \rightarrow 0,$  but thanks to the Lebesgue
theorem, one sees that

\begin{equation}\label{limT}
\lim_{s\rightarrow
0}T_1(s)x=\frac{1}{2i\pi}\int_{\Gamma}\frac{R(\lambda,A)Ax}{\lambda}d\lambda+x
,
\end{equation}
for all $x\in D(A).$ It is so allowed to define an unbounded
projection $ P\ \ x\mapsto Px=\frac{1}{2i\pi}\int_{\Gamma}\frac{R(\lambda,A)Ax}{\lambda}d\lambda+x. $

The relation (\ref{limT}) says that

\begin{equation}\label{limT=P}
\lim_{s\rightarrow 0}T_1(s)x=Px.
\end{equation}

The semigroup property and identity (\ref{limT=P}) ensure that $P$
is an unbounded  projector, which  means $P^2x=Px$ for all $x\in
D(A^2).$ Indeed one may write $\lim_{t+s\rightarrow 0}T_1(t+s)x=Px$
where $t$ and $s$ are assumed to be  positive and so converge both
to zero.
 But $\displaystyle\lim_{t+s\rightarrow 0}T_1(t+s)x=\displaystyle\lim_{t\rightarrow 0,s\rightarrow 0}T_1(t)T_1(s)x=
 \displaystyle\lim_{t\rightarrow 0}T(t)Px=P^2x$. (One may simplify by taking $t=s$.)\\
We consider the spaces:
$$
E_1= R(PA^{-1})=\{x\in X,\  PA^{-1}x= A^{-1}x\}
$$
and
$$
E_2=Ker(PA^{-1})=\{x\in X; \ PA^{-1}x=0\}.
$$

Denoting again by $T_k$ the restriction of $T_k$ to $E_k$ $(k=1,2),$
we have:
\begin{lemma}\label{splitting-lemma}
    The operators $A_1$ and $A_2$ in Proposition \ref{proposition_splitting}
    satisfy the following:\\
    i)  $A_1$  generates the  holomorphic semigroup $ T_1$ on $E_1.$\\
    ii) $-A_2$  generates a holomorphic semigroup $T_2$ on $E_2.$
$\sigma(A_1)=\sigma_1$ and $\sigma(A_2)=\sigma_2.$
\end{lemma}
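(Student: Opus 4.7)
The plan is to prove (i) in detail; part (ii) then follows by applying the very same construction with $A$ replaced by $-A$, whose spectrum $(-\sigma_1)\cup(-\sigma_2)$ again decomposes into a left and a right piece with $-\sigma_2\subset\{\Re\lambda<-\delta\}$ now playing the role of $\sigma_1$. I will proceed in four steps, leaning repeatedly on the resolvent identity and on the commutativity (established through double contour integrals and Fubini as in Lemma \ref{smgprop}) of $T_1(t)$ with every $R(\mu,A)$ whose pole $\mu$ lies to the right of $\Gamma$, and in particular with $A^{-1}$ and with the bounded operator $PA^{-1}$.

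For holomorphy: since $\beta\in(\pi/2,\pi)$, the two rays of $\Gamma$ satisfy $\Re(ze^{\pm i\beta})<0$ whenever $|\arg z|<\beta-\pi/2$, while $\|R(\lambda,A)\|=O(1/|\lambda|)$ on those rays from the group growth estimate preceding Lemma \ref{smgprop}. Consequently the integral
\[
T_1(z)x=\frac{1}{2i\pi}\int_\Gamma e^{z\lambda}R(\lambda,A)x\,d\lambda
\]
is absolutely convergent and holomorphic in $z$ throughout the sector $|\arg z|<\beta-\pi/2$; since $\beta$ may be taken arbitrarily close to $\pi$, the angle of holomorphy is $\pi/2$, and the semigroup law is already in Lemma \ref{smgprop}.

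The delicate step is to show that $T_1$ is a $C_0$-semigroup on $E_1$ whose generator is $A_1$. Invariance of $E_1$ under $T_1(t)$ is immediate from the commutation of $T_1(t)$ with $PA^{-1}$. For the strong limit at $0$, equation (\ref{limT=P}) gives $T_1(t)x\to Px$ as $t\to 0^+$ for every $x\in D(A)$; when moreover $x\in E_1$, the relation $PA^{-1}(Ax)=A^{-1}(Ax)$, obtained from $x\in E_1$ together with the $A$-invariance of $E_1$ (inherited from its invariance under every $R(\mu,A)$ with $\mu$ to the right of $\Gamma$), forces $Px=x$, so $T_1(t)x\to x$. Density of $D(A)\cap E_1$ in $E_1$, combined with local boundedness of $T_1(t)$ (from the contour estimate), extends strong continuity to all of $E_1$. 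Differentiating under the integral sign and using $\lambda R(\lambda,A)x=R(\lambda,A)Ax+x$ identifies the generator on $E_1$ with the restriction of $A$, namely $A_1$.

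Finally, for $\sigma(A_1)=\sigma_1$: for $\lambda_0$ lying strictly to the right of $\Gamma$ one verifies by direct computation (resolvent equation plus Cauchy's theorem) that
\[
R(\lambda_0,A_1)x=\frac{1}{2i\pi}\int_\Gamma\frac{R(\lambda,A)x}{\lambda_0-\lambda}\,d\lambda,\qquad x\in E_1,
\]
is a bounded inverse of $\lambda_0-A_1$; letting $r\searrow\delta$ and $\beta\nearrow\pi$ yields $\sigma(A_1)\subset\sigma_1$. Conversely, if some $\lambda_0\in\sigma_1$ lay in $\rho(A_1)$, then pasting $R(\lambda_0,A_1)$ on $E_1$ with $R(\lambda_0,A_2)$ on $E_2$ (available because $\lambda_0\notin\sigma_2$) would produce a bounded inverse of $\lambda_0-A$ on $D(A)\subset E_1\oplus E_2$, contradicting $\lambda_0\in\sigma(A)$. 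The same exchange $A\leftrightarrow -A$ and $\sigma_1\leftrightarrow -\sigma_2$ delivers (ii). I expect the main obstacle to be the strong-continuity step: the spectral integrals defining $P$ converge only conditionally, which is precisely why $P$ is defined only on $D(A)$ and why both its quasi-idempotence $P^2=P$ on $D(A^2)$ and the identity $Px=x$ on $E_1\cap D(A)$ rest on the regularizing factor $A^{-1}$; carrying out these manipulations rigorously while preserving $T_1$-invariance of $E_1$ is the technical heart of the argument.
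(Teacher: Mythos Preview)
Your approach is genuinely different from the paper's and largely sound, but there is one real gap. You claim that ``$\beta$ may be taken arbitrarily close to $\pi$,'' and you use this both to get the full angle $\pi/2$ for $T_1$ and to sweep out all of $\mathbb C\setminus\sigma_1$ in your resolvent formula for $A_1$. This is not justified: the contour $\Gamma$ must stay inside $\rho(A)$ with $\sigma_1$ on its left, and the only resolvent estimate available from the group is $\Vert R(\lambda,A)\Vert\le M/(|\Im\lambda|-w)$ for $|\Im\lambda|>w$. As $\beta\nearrow\pi$ the rays of $\Gamma$ approach the negative real axis and must cross the horizontal strip $\{|\Im\lambda|\le w\}$ arbitrarily far to the left, precisely where $\sigma_1$ sits; both the inclusion $\Gamma\subset\rho(A)$ and the $O(1/|\lambda|)$ bound break down. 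So your direct contour argument only produces holomorphy on a sector of half-angle $\beta_0-\pi/2$ for some fixed $\beta_0<\pi$ determined by $w$ and $\delta$, and your resolvent formula only covers $\lambda_0$ to the right of that fixed $\Gamma$.

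The paper sidesteps this by never pushing the contour: it computes the Laplace transform $\int_0^\infty e^{-zt}T_1(t)A_1^{-1}x\,dt$ for $\Re z>\delta$, identifies it with $R(z,A_1)A_1^{-1}x$, and then invokes Phragm\'en--Lindel\"of (combining this Laplace bound on a vertical half-plane with the group-derived bound $\Vert zR(z,A_1)\Vert\le \Vert zR(z,A)\Vert\le M|z|/(|\Im z|-w)$ on horizontal half-planes) to obtain $\sup_{\Re z>0}\Vert zR(z,A_1)\Vert<\infty$ in one stroke. That resolvent bound is exactly the characterisation of generators of bounded holomorphic semigroups of angle $\pi/2$, so the full angle and the inclusion $\sigma(A_1)\subset\{\Re z\le 0\}$ come for free; combining with $\sigma(A_1)\subset\sigma(A)$ (from invariance of $E_1$ under $R(\lambda,A)$) then yields $\sigma(A_1)\subset\sigma_1$. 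Your route can be repaired---establish holomorphy on the smaller sector first, identify the generator as $A_1$, use $R(\lambda,A)$-invariance of $E_1$ to get $\sigma(A_1)\subset\sigma(A)\cap\{\Re z\le 0\}=\sigma_1$, and only then upgrade the angle to $\pi/2$ via the resolvent characterisation---but as written the ``$\beta\nearrow\pi$'' step is a genuine hole.
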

\begin{proof}
    i) For all $x\in E_1$ and $ {\rm   Re}(z)>\delta $ we have $$
    \int_0^\infty\exp(-zt)T_1(t)A_1^{-1}xdt=\frac{1}{2\pi i}\int_0^\infty\int_\Gamma\exp(-zt)\exp(\lambda t)
    \frac{R(\lambda,A)x+A^{-1}x}{-\lambda}d\lambda dt$$
$$=-\frac{1}{2\pi
i}\int_\Gamma\int_0^\infty\exp((-z+\lambda)t)\frac{R(\lambda,A)x}{\lambda}dtd\lambda-\frac{1}{2\pi
i}\int_\Gamma\int_0^\infty\exp((-z+\lambda)t)\frac{A^{-1}x}{\lambda}dt
d\lambda
$$
$$=-\frac{1}{2\pi
i}\int_\Gamma\int_0^\infty\exp((-z+\lambda)t)\frac{R(\lambda,A)x}{\lambda}dt
\lambda-\frac{1}{2\pi
i}\int_\Gamma\int_0^\infty\exp((-z+\lambda)t)\frac{A^{-1}x}{\lambda}dt
d\lambda$$
$$=\frac{1}{2\pi
i}\int_\Gamma\frac{R(\lambda,A)x}{\lambda (-z+\lambda)}
d\lambda+\frac{1}{2\pi
i}\int_\Gamma\frac{A^{-1}x}{\lambda(-z+\lambda)}
d\lambda=\int_\Gamma\frac{R(\lambda,A)x}{\lambda (-z+\lambda)}
d\lambda+\frac{A^{-1}x}{z}.$$ From which we deduce that $$(z-A_1)
    \int_0^\infty\exp(-zt)T_1(t)A_1^{-1}xdt=PA^{-1}x=A^{-1}x,
$$ which means that $R(z,A_1)A_1^{-1}x=
    \int_0^\infty\exp(-zt)T_1(t)A_1^{-1}xdt$ if   $z\in
    \rho(A_1).$
    It follows from the Phragmen-Lindel\"of Theorem that
    $$\sup_{z\in \rho(A_1), {\rm   Re}(z)>0}\Vert
    zR(z,A_1)\Vert<\infty.$$  Then $\{ {\rm   Re}(z)>0\}\subset
    \rho(A_1)$ and $\sup_{ {\rm   Re}(z)>0}\Vert
    zR(z,A_1)\Vert<\infty.$ The claim follows by the well known Phragm\'en-Lindel\"{o}f Theorem (see \cite{SW71} or \cite{Ti51}).\\

    ii) Analogously, considering $-A$ instead of $A$  and using a the change
    of variable $\lambda \rightarrow -\lambda $we obtain $-A_2$ is the generator of an holomorphic semigroup with angle $\pi /2$ in
    $E_2$.\\
    iii) From (i) we conclude $\sigma(A_1)\subset \sigma_1$.
   A similar purpose prove that
    $\sigma(A_2) \subset\sigma_2.$\\
    It suffices now to prove that $\sigma(A_1)\supseteq\sigma_1.$ If it is not the case, there exists
    $\lambda_1\in \C$ such that $\lambda_1\in \sigma_1\verb|\|\sigma(A_1).$ This implies that
    $\lambda_1 \in \rho(A_1)$. To obtain a contradiction, we prove that  $\lambda_1\in \rho(A).$
    Let us establish that it is injective and surjective:

    \emph{$\lambda_1-A$ is injective}: for $x\ne 0$ verifying $Ax=\lambda_1x,$ we have for all
     $\lambda\in \Gamma$ the identity $R(\lambda,A)x=\frac{x}{\lambda-\lambda_1}$ so

    \begin{eqnarray*}
        PA^{-1}x&=& \frac{1}{2i\pi}\int_{\Gamma}\frac{R(\lambda,A)x}{\lambda}d\lambda+A^{-1}x\\
        &=& \frac{1}{2i\pi}\int_{\Gamma}\frac{x}{\lambda(\lambda_1-\lambda)}d\lambda+A^{-1}x\\
        &=& -\frac{x}{\lambda_1}+A^{-1}x\\
        &=&0.
    \end{eqnarray*}

    But $PA^{-1}x=0$ implies that $x\in E_1$ so $Ax=A_1x=\lambda_1x.$ Since $\lambda_1\in \rho(A_1)$ then $x=0.$\\

    The operator \emph{$\lambda_1-A$ is surjective}:
    To prove that it is surjective, we will construct a preimage   $x$  under  $\lambda_1-A$    for an arbitrarily chosen $y\in E.$  It suffices to consider
    \begin{equation}\label{surjectivity}
    x=-A^{-1}y+\lambda_1 R(\lambda_1,A_1)(A^{-1}y-PA^{-1}y)+\lambda_1 R(\lambda_1,A_2)(PA^{-1}y).
    \end{equation}
    The identity (\ref{surjectivity})  is well defined because
    $A^{-1}y-PA^{-1}y\in E_1$ and $PA^{-1}y\in E_2.$ It is easy to verify that $(\lambda_1-A)x=y.$\\
    In general, the sum $E_1\oplus E_2$ in Proposition (\ref{proposition_splitting}) is not always closed as the  examples  below prove. The  example below (i.e Example \ref{H1example})  is based on an interesting result of \emph{harmonic analysis} due to D. J. Newman \cite{New61}.

\end{proof}

\begin{example}\label{H1example}
Let $E=L^1_{2\pi} $ and $A$ be given by $Af:=-if'$ with domain
$D(A):=\{f\in L^1_{2\pi};\ f'\in L^1_{2\pi}\}.$ Then $iA$
generates the $C_0-$group $T$  given by $T(it)f(x):=f(x+t).$ We
have: $E_1=\{ f\in L^1_{2\pi};\ \int_0^{2\pi} e^{-inx}f(x)dx=0 \hbox{
    for }
n\le 0\}=H^1$ and $E_2=\{ f\in L^1_{2\pi};\ \int_0^{2\pi} e^{-inx}f(x)dx=0
\hbox{ for } n> 0\}.$
  Moreover, by  a result due to D. J.  Newman  \cite{New61}, there is
no bounded  projection of $L^1_{2\pi}$ onto the Hardy space $H^1.$
It follows that $E_1 \oplus E_2\neq E.$ For more information about this important result on can see also \cite{Rud69} or more recently \cite{Ri00}.

\end{example}

%
%
%

\end{document}